\numberwithin{equation}{section}
\newtheorem{theorem}[equation]{Theorem} 
\newtheorem{lemma}[equation]{Lemma}     
\DeclareMathOperator{\Cl}{Cl}
\DeclareMathOperator{\Gal}{Gal}
\DeclareMathOperator{\GL}{GL}
\DeclareMathOperator{\impart}{Im}
\DeclareMathOperator{\M}{M}
\DeclareMathOperator{\N}{N}
\DeclareMathOperator{\nrd}{nrd}
\DeclareMathOperator{\repart}{Re}
\DeclareMathOperator{\trd}{trd}
\DeclareMathOperator{\Tr}{Tr}
\DeclareMathOperator{\vol}{vol}
\newcommand{\C}{\mathbb C}
\newcommand{\PP}{\mathbb P}
\newcommand{\Q}{\mathbb Q}
\newcommand{\R}{\mathbb R}
\newcommand{\Z}{\mathbb Z}
\newcommand{\calD}{\mathcal D}
\newcommand{\calO}{\mathcal O}
\newcommand{\calN}{\mathcal N}
\newcommand{\eps}{\epsilon}
\newcommand{\frakb}{\mathfrak{b}}
\newcommand{\frakl}{\mathfrak{l}}
\newcommand{\frakM}{\mathfrak{M}}
\newcommand{\frakp}{\mathfrak{p}}
\newcommand{\frakq}{\mathfrak{q}}
\newcommand{\frakn}{\mathfrak{n}}
\newcommand{\frakd}{\mathfrak{d}}
\newcommand{\frakN}{\mathfrak{N}}
\newcommand{\calH}{\mathcal{H}}
\newcommand     {\abs}[1]       {{\left\lvert{#1}\right\rvert}}
\newcommand{\legen}[2]{\left(\frac{#1}{#2}\right)}
\newcommand{\quat}[2]{\displaystyle{\biggl(\frac{#1}{#2}\biggr)}}
\newcommand{\la}{\langle}
\newcommand{\ra}{\rangle}
\newcommand{\defi}[1]{\textbf{\textsf{#1}}} 				
\title[Twists of Hilbert modular forms]
 {Nonvanishing of twists of $L$-functions \\ attached to Hilbert
  modular forms}
\author{Nathan C.\ Ryan, Gonzalo Tornar\'ia, and John Voight}
\begin{document}
\maketitle

\begin{abstract}
We describe algorithms for computing central values of twists of
$L$-functions associated to Hilbert modular forms, carry out 
such computations for a number of examples, and compare the results of these computations 
to some heuristics and predictions from random matrix theory.  
\end{abstract}



\section{Introduction}

Let $f(q)=\sum_{n=1}^{\infty} a_n q^n \in S_k(N)$ be a classical newform of weight $k$ and level $N$, and let $\lambda_n=a_n/\sqrt{n}^{k-1}$.  For $D<0$ a fundamental discriminant, let 
\[ L(f,s,\chi_D)=\sum_{n=1}^{\infty} 
\chi_D(n)\frac{\lambda_n}{n^s} \]
be the $L$-series of $f$ (in the analytic normalization) twisted by
the quadratic character $\chi_D$ associated to the imaginary quadratic
field $\Q(\sqrt{D})$.  The central values $L(f,1/2,\chi_D)$ encode
interesting arithmetic information about the form $f$, and a number of explicit
investigations have been carried out examining the family of these values \cite{Gross,BSP,PT2,PT1,MRVT}.  

An efficient way to compute the family $L(f,1/2,\chi_D)$ of central values with varying discriminant $D$ is to use Waldspurger's theorem \cite{Waldspurger}, which asserts that the values are related to the Fourier
coefficients of a certain half-integer weight modular form.
Concretely, for $D<0$ coprime to $N$, we have
\begin{equation} \label{eqn:LfchiD}
L(f,1/2,\chi_D) = \kappa_f  \frac{c_{|D|}(g)^2}{\sqrt{\abs{D}}^{k-1}}
\end{equation}
where the (nonzero) constant $\kappa_f$ is independent of $D$ and $c_{|D|}(g)$
is the $|D|$th coefficient of a
modular form $g$ of weight $(k+1)/2$ related to $f$ via the Shimura correspondence.
Computing central values using \eqref{eqn:LfchiD} has the advantage that the description of $g$ as a linear combination of theta series permits the rapid computation of a large number of coefficients: for example, Hart--Tornar\'ia--Watkins \cite{hart2010congruent} compute hundreds of billions of twists of the congruent number elliptic curve (using FFT methods).  By comparison, experiments with the distribution of twists with similar Hodge data computed \emph{without} using Waldspurger's theorem are much less extensive: see e.g.\ Watkins \cite[\S 6.6]{MR2410120} and David--Fearnley--Kisilevsky \cite{MR2322350}.

Several authors have pursued Waldspurger's theorem in the setting of Hilbert modular forms, including
Shimura \cite{Shimura}, Baruch--Mao \cite{BaruchMao}, Xue \cite{Xue}, Sirolli \cite{Sirolli}, and Hiraga--Ikeda \cite{HiragaIkeda}.  In this paper, we develop algorithms using these
formulas to compute families of central values $L(f,1/2,\chi_D)$ for 
Hilbert modular forms $f$ over totally real fields $F$, defined in an analogous way.  
We believe that these computations are of independent interest,
but we also use them to provide some partial evidence for conjectures concerning statistics for central
values in families of twists motivated by some heuristics and refined by random matrix theory.  

Let $F$ be a totally real field of degree $n=[F:\Q]$ with ring of integers $\Z_F$, and suppose that $F$ has narrow class number $1$.  Let
\[ \calD(\Z_F) = \{D \in \Z_F/\Z_F^{\times 2} : \text{$D$ fundamental discriminant and } D \ll 0\} \]
be the set of totally negative fundamental discriminants in $\Z_F$; the set $\calD(\Z_F)$ is in canonical bijection with the CM extensions $F(\sqrt{D})$ of $F$.  For $X>0$, let
\[ \calD(\Z_F;X) = \{D \in \calD(\Z_F) : |N_{F/\Q}(D)| \leq X\}. \]
To $D \in \calD(\Z_F)$, let $\chi_D$ be the character associated to the quadratic extension $F(\sqrt{D})$. 

Let $f$ be a Hilbert cusp form over $F$ of parallel weight $k \in 2\Z_{>0}$ and
level $\frakN \subseteq \Z_F$ with rational integer Hecke eigenvalues, and let
$w_f$ be the sign of the functional equation for $L(f,s)$.  
We will be interested in the number of vanishings
\begin{equation} \label{eqn:NfF}
\calN_f(\Z_F;X)=\#\bigl\{D \in \calD(\Z_F;X) \;:\;
    \text{$\chi_D(\frakN)=(-1)^n\,w_f$ and $L(f,1/2,\chi_D)=0$}\bigr\} 
\end{equation}
as a function of $X$; the condition that $\chi_D(\frakN)=(-1)^n\,w_f$ is equivalent to condition that the sign of the functional
equation for $L(f,s,\chi_D)$ is $+1$, so that $L(f,s,\chi_D)$ vanishes to even
order.  

\begin{conjecture}\label{conj:ZF}
There exist $b_f,C_f\geq 0$ depending on $f$ such that as $X \to \infty$, we have
\[ \calN_f(\Z_F;X) \sim C_f\/X^{1-(k-1)/4}\,(\log X)^{b_f}. \]
\end{conjecture}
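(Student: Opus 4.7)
The plan is to derive the asymptotic via the now-standard Conrey--Keating--Rubinstein--Snaith (CKRS) framework combined with the Hilbert-modular Waldspurger formula advertised in the introduction. First I would invoke the Waldspurger-type theorem of Shimura, Baruch--Mao, Xue, Sirolli, or Hiraga--Ikeda to produce a parallel half-integral weight form $g$ of weight $(k+1)/2$ such that, for $D \in \calD(\Z_F)$ with $\chi_D(\frakN)=(-1)^n w_f$,
\[
L(f,1/2,\chi_D) \;=\; \kappa_f\,\frac{c_D(g)^2}{|N_{F/\Q}(D)|^{(k-1)/2}},
\]
with $\kappa_f \neq 0$ independent of $D$ and $c_D(g)$ lying in an order of a fixed number field $K_f$ (using that $f$ has rational Hecke eigenvalues). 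Thus $L(f,1/2,\chi_D)=0$ if and only if $c_D(g)=0$, and the counting function $\calN_f(\Z_F;X)$ is reduced to counting vanishings among these algebraic-integer Fourier coefficients as $|N_{F/\Q}(D)|\leq X$.

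Next I would feed this into the CKRS discretization heuristic. The orthogonal-family RMT prediction for the moments of $L(f,1/2,\chi_D)$ in the signed subfamily predicts a typical nonzero value of order $(\log|N(D)|)^{\alpha}$ for an explicit $\alpha$ (depending only on the weight and the local data of $f$), so by the Waldspurger identity the typical nonzero coefficient is of size
\[
|c_D(g)| \;\asymp\; |N_{F/\Q}(D)|^{(k-1)/4}\,(\log |N_{F/\Q}(D)|)^{\alpha/2}.
\]
Modelling $c_D(g)$ as a ``random'' element in a lattice inside $K_f\otimes\R$ of this characteristic size (respecting the local root-number and congruence conditions encoded by $\chi_D(\frakN)=(-1)^n w_f$), the probability that it actually hits $0$ is comparable to the reciprocal of this size. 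Summing this single-$D$ vanishing probability over the positive-density set $\{D\in\calD(\Z_F;X):\chi_D(\frakN)=(-1)^n w_f\}$, whose counting function grows linearly in $X$ by a standard Landau/Tauberian argument in $F$ (using that $F$ has narrow class number $1$), partial summation yields
\[
\calN_f(\Z_F;X) \;\sim\; C_f\int_1^{X} \frac{dt}{t^{(k-1)/4}\,(\log t)^{\alpha/2}}
\;\sim\; C_f\,X^{1-(k-1)/4}\,(\log X)^{b_f},
\]
with $b_f = -\alpha/2$ plus corrections from secondary CKRS terms, local densities at primes dividing $\frakN$ and the discriminant of $F$, and the failure of the random-integer model on thin arithmetic subfamilies.

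The hard part will be justifying the discretization step: even over $\Q$ this is the content of the CKRS conjecture, and no unconditional proof is known. Making the model rigorous requires a lower bound on nonzero $c_D(g)$ of the predicted shape (to rule out spurious small values that are not zero but look like them in the model) and a corresponding upper bound on the ``nonarchimedean obstruction'' that biases the coefficients $c_D(g)$ towards or away from zero through the local Waldspurger constants. The first is essentially a subconvexity question for $L(f,1/2,\chi_D)$; the second requires an analysis of local test vectors in the Shimura-correspondence setup for Hilbert modular forms, beyond what the cited papers establish. Everything else---the reduction to Fourier coefficients, the counting of admissible discriminants in $\calD(\Z_F;X)$, and the integration against the heuristic density---is essentially routine once the random model is granted.
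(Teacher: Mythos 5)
This statement is a conjecture: the paper gives no proof, only the motivating heuristic of Section~\ref{sec:rmt} (reduce via the Waldspurger-type formula of Conjecture~\ref{conj:Waldspurger} to the coefficients $c_{|D|}(g)$, model them as roughly uniformly distributed in $[0,|\N(D)|^{(k-1)/4}]$ up to the RMT-predicted logarithmic factors $2^{\lambda_F(D,f)}(\log|\N(D)|)^{A_n}$, and sum the resulting vanishing probability over the $\Theta(X)$ discriminants in $\calD(\Z_F;X)$), supplemented by the numerical evidence of Section~\ref{sec:comps}. Your proposal reproduces essentially this same CKRS-style heuristic, and as you yourself note the discretization step is conjectural even over $\Q$, so your argument matches the paper's approach (with only cosmetic differences in how the power of $\log X$ is packaged) rather than constituting a proof.
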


When $k=2$, the modular form $f$ is expected (and known in many cases)
to correspond to an isogeny class of elliptic curves $E$ over $F$, so according
to the conjecture of Birch--Swinnerton-Dyer, Conjecture \ref{conj:ZF} predicts
the distribution of curves of even rank $\geq 2$ in CM quadratic twists of $E$
over $F$.  
Conjecture \ref{conj:ZF} generalizes the conjecture of
Conrey--Rubinstein--Keating--Snaith \cite{CKRS} made in the case $F=\Q$,
thinking of $L(f,s)$ as a degree 2 $L$-function over $F$.  The additional power $b_f$ of $\log X$ that appears is conjecturally
related to arithmetic and geometric properties of $f$: for
example, if $F=\Q$ and $k=2$, then Delaunay--Watkins \cite{MR2322344}
conjecture a value for $b_f$ that depends on the two-torsion structure of
elliptic curves in the isogeny class associated to $f$ (see
Section~\ref{sec:rmt} below for more discussion).  The constant $C_f \geq 0$ is
not presently understood; when $k \geq 6$, we predict that $C_f=0$.

Our second conjecture is a variant of the first and investigates a
new phenomenon that arises in the context of Hilbert modular form, restricting to twists by discriminants $D \in \Z$; we still write $\chi_D$ for the quadratic character of $F(\sqrt{D})$ over $F$.   Let
\begin{equation} \label{eqn:NfZF}
 \calN_{f}(\Z;X)=\#\bigl\{D \in \calD(\Z;X) \;:\;
    \text{$\chi_D(\frakN)=(-1)^n\,w_f$ and $L(f,1/2,\chi_D)=0$}\bigr\}. 
\end{equation}

\begin{conjecture}\label{conj:Z}
There exist $b_{f,\Z},C_{f,\Z}\geq 0$ depending on $f$ such that as $X \to \infty$, we have
\[ \calN_{f}(\Z;X) \sim C_{f,\Z}\, X^{1-n(k-1)/4}\,(\log X)^{b_{f,\Z}}. \]
\end{conjecture}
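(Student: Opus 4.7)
Since Conjecture~\ref{conj:Z} is not amenable to a rigorous proof with current techniques, the plan is to present the heuristic justification in the spirit of Conrey, Keating, Rubinstein and Snaith \cite{CKRS}. The key distinction from Conjecture~\ref{conj:ZF} is that when the twist discriminants are restricted to $D \in \Z$, the $L$-function $L(f,s,\chi_D)$ is most naturally viewed as a degree-$2n$ $L$-function over $\Q$ (arising from base change of $f$ to $F$), so the random matrix model attached to the orthogonal-even family has its size scale set by this higher degree.

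First I would establish a Hilbert analogue of the Waldspurger formula \eqref{eqn:LfchiD}: building on the work of Shimura, Baruch--Mao, Xue, Sirolli, and Hiraga--Ikeda cited above, one expects an identity of the shape
\[ L(f,1/2,\chi_D) \;=\; \kappa_f\,\frac{|c_g(D)|^2}{|N_{F/\Q}(D)|^{(k-1)/2}} \;=\; \kappa_f\,\frac{|c_g(D)|^2}{|D|^{n(k-1)/2}} \]
for $D \in \calD(\Z;X)$, where $c_g(D) \in \Z$ is a Fourier coefficient of a half-integer weight Hilbert modular form $g$ associated to $f$ by the Shimura correspondence. Since nonzero integer squares are at least $1$, this discretizes the central value: when nonzero, $L(f,1/2,\chi_D) \geq \kappa_f/|D|^{n(k-1)/2}$. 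Next I would invoke the random matrix heuristic for orthogonal-even families of degree $2n$, predicting
\[ \Pr\bigl(\,|L(f,1/2,\chi_D)| < \epsilon\,\bigr) \;\sim\; c\,\epsilon^{1/2}\,(\log|D|)^{b_{f,\Z}} \]
for small $\epsilon$, where the exponent $b_{f,\Z}$ reflects arithmetic correction factors analogous to those studied by Delaunay--Watkins \cite{MR2322344}. Combining these two ingredients gives a per-$D$ vanishing probability of $\sim c\,|D|^{-n(k-1)/4}(\log|D|)^{b_{f,\Z}}$, and partial summation over $D \in \calD(\Z;X)$ yields the stated asymptotic.

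The hard part will be that every ingredient above is itself conjectural in the required generality: a Waldspurger-type identity for Hilbert modular forms twisted by rational $\chi_D$ has been established only under restrictive hypotheses on $F$, $f$, and the discriminant, and the $\epsilon^{1/2}$ behaviour near zero relies on moment conjectures that remain open even for classical modular forms over $\Q$. Consequently, the right posture is to treat Conjecture~\ref{conj:Z} as a prediction to be tested numerically, which motivates the algorithms developed in the remainder of the paper.
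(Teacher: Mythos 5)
Your proposal is correct in spirit and follows essentially the same route the paper takes: since this is a conjecture, the paper only offers a heuristic, namely the Waldspurger-type discretization of $L(f,1/2,\chi_D)$ via the coefficient $c_{|D|}(g)$ with denominator $\sqrt{\N(D)}^{k-1}=|D|^{n(k-1)/2}$ for $D\in\Z$, combined with the CKRS/random-matrix prediction for the value distribution near zero, and then summation over $D\in\calD(\Z;X)$, exactly as you outline. The only point where the paper is more specific is the power of the logarithm: it predicts $b_{f,\Z}=b_f\,e_F+A_n$, where $e_F$ is the expected number of primes of $\Z_F$ above a rational prime (reflecting that $2^{\lambda_F(D,f)}$ grows with the number of prime ideals dividing $(D)$), whereas you leave the log exponent as an unspecified arithmetic correction; since the statement only asserts existence of $b_{f,\Z}$, this does not affect the conclusion.
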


In Conjecture \ref{conj:Z}, we instead are thinking of $L(f,s)$ as a degree $2n$ $L$-function over $\Q$, and as a result, vanishing twists (with $D \in \Z$) are more rare.  Put another way, the discriminants $D \in \Z$ are sparse among all discriminants $D \in \Z_F$, and consequently their contribution to the number of vanishing twists is scant.  The power $b_{f,\Z}$ of $\log X$ depends on $f$ as in Conjecture 1.2 but with an additional factor coming from the expected number of primes in the factorization in $\Z_F$ of $(p)$ for a prime $p\in\Z$.

\begin{remark}
In the case of non-parallel weight $(k_1,\dots,k_n)$, we expect
Conjectures~\ref{conj:ZF} and \ref{conj:Z} hold
replacing $k$ with the average of the $k_i$.
\end{remark}

This paper is organized as follows.  We begin in Section~\ref{sec:background} by giving some background on Hilbert modular forms and their $L$-functions, and we state a
version of Waldspurger's theorem for Hilbert modular forms.  In Section~\ref{sec:algs}, we make this theory algorithmic, and 
exhibit methods to compute a large number of central values of $L$-functions of quadratic twists over totally real fields.  In Section~\ref{sec:rmt}, we describe the heuristics motivating Conjectures \ref{conj:ZF}--\ref{conj:Z} and show how they might be refined using the connections between random matrix theory and $L$-functions in our context.  In Section~\ref{sec:comps}, we describe the experiments we carried out and present some data, tables and graphs.  Our computations are done primarily in \textsf{Magma} \cite{MAGMA}.   We conclude in Section \ref{sec:ques} with some remaining questions.

\section{Background and notation}\label{sec:background}

In this section, we summarize the background and introduce the notation we use throughout.  General references for Hilbert modular forms include Freitag \cite{Freitag}, van der Geer \cite{geer}, and Goren \cite{Goren}; for algorithmic aspects, see the survey of Demb\'el\'e--Voight \cite{DembeleVoight}.  

\subsection*{Hilbert modular forms}

Let $F$ be a totally real number field of degree $n=[F:\Q]$,
discriminant $d_F$, ring of integers $\Z_F$, and different $\frakd$.
We assume throughout that $F$ has narrow class number 1; removing this hypothesis is possible but would make the exposition more technical.  Let $v_1,\dots,v_n$ be the real embeddings
of $F$ into $\R$. Let $\frakN$ be a
nonzero ideal of $\Z_F$.  Let 
\[ F_+=\{a \in F : v_i(\alpha)>0 \text{ for all $i=1,\dots,n$}\} \]
and for a fractional ideal $\frakb$ of $F$, let $\frakb_+=\frakb \cap F_+$.  Let $\calH=\{z \in \C : \impart(z)>0\}$ denote the upper half-plane.

A \defi{Hilbert modular form} over $F$ of \defi{weight} $k=(k_i)_i \in 2\Z_{\geq 0}^n$ and \defi{level} $\frakN$ is a
holomorphic function $f:\calH^n\to\C$ such that for all
$z=(z_1,\dots,z_n)\in \calH^n$ and all
\[
\gamma\in\Gamma_0(\frakN)=\left\{\gamma=\begin{pmatrix}a&b\\c&d\end{pmatrix}\in
\GL_2(\Z_F):c\in\frakN\text{ and }\det(\gamma) \in \Z_{F,+} \right\},
\]
we have
%
\begin{equation} \label{eqn:fgammazfull}
f(\gamma z)=f\left(\frac{a_1z_1+b_1}{c_1z_1+d_1},\dots,\frac{a_n z_n+b_n}{c_nz_n+b_n}\right)=\left(\prod_{i=1}^n\frac{(c_iz_i+d_i)^{k_i}}{\det\gamma_i^{k_i/2}}\right) f(z),
\end{equation}
where $\gamma_i=v_i(\gamma)=\begin{pmatrix}a_i&b_i\\c_i&d_i\end{pmatrix}$, and such that further $f$ satisfies a condition of bounded growth (necessary only in the case $F=\Q$, by Koecher's principle).  If $k_i=k$ for all $i=1,\dots,n$, we say $f$ has \defi{parallel weight} $k \in 2\Z_{\geq 0}$.  A \defi{Hilbert cusp form} is a Hilbert modular form that vanishes at the \defi{cusps}, elements of $\PP^1(F) \hookrightarrow \PP^1(\R)^n$.  We denote by $S_k(\frakN) \subseteq M_k(\frakN)$ the space of Hilbert cusp forms inside the finite-dimensional space of Hilbert modular forms of weight $k$ and level $\frakN$ over $F$.

A Hilbert modular form $f \in M_k(\frakN)$ admits a Fourier expansion
\begin{equation} \label{eqn:fourier}
f(z) = a_0 +\sum_{\mu\in (\frakd^{-1})_+}a_\mu q^{\Tr(\mu z)}
\end{equation}
where $q^{\Tr(\mu z)}=\exp(2\pi i\sum_{i=1}^n \mu_i z_i)$, analogous to the $q$-expansion of a classical modular form.  If $f(z) \in S_k(\frakN)$, then $a_0=0$.

The space $S_k(\frakN)$ is equipped with the Petersson inner product given by
\begin{equation} \label{eqn:Petersson}
\la f,g\ra =\frac{1}{\vol(X_0(\frakN))}\int_{X_0(\frakN)}f(z)\overline{g(z)}y^{k-2}\, dx\, dy
\end{equation}
where $X_0(\frakN)=\Gamma_0(\frakN)\backslash \calH^n$, $z_i = x_i + \sqrt{-1} y_i$ and $dx=dx_1 \cdots dx_n$, $dy=dy_1\cdots dy_n$.  Further, the spaces $M_k(\frakN)$ and $S_k(\frakN)$ are equipped with an
action of pairwise commuting Hecke operators $T_\frakn$, indexed by the nonzero ideals $\frakn$ of $\Z_F$ with $\frakn$ coprime to $\frakN$, that are self-adjoint with respect to the Petersson inner product. 
These spaces are also equipped with Atkin-Lehner operators $W_{\frakp^e}$ for prime powers $\frakp^e \parallel \frakN$.
If $f\in S_k(\frakN)$ is a (simultaneous) eigenform, we say $f$ is \defi{normalized} if $a_{(1)}=1$; then $T_\frakn
f=a_\frakn f$ and each eigenvalue $a_\frakn$ is an algebraic integer
that lies in a totally real number field $K_f=\Q(\{a_\frakn\})$.  

There are natural injections $S_k(\frakM) \hookrightarrow S_k(\frakN)$ when
$\frakM \mid \frakN$, and we define a \defi{newform} to be a normalized cusp
eigenform which is orthogonal to the image of any such injection with $\frakM$ a
proper divisor of $\frakN$.  For each unitary divisor $\frakq \parallel \frakN$
(so $\frakq \mid \frakN$ and $\frakq$ is coprime to $\frakN/\frakq$), there is
an Atkin--Lehner operator $W_{\frakq}$ that is an involution on the new
subspace of $S_k(\frakN)$, so that $W_{\frakq} f = w_{\frakq,f} f = \pm f$ for
a Hecke newform $f$.

\subsection*{Theta series}

The theory of half-integral weight modular forms was developed by Shimura in a series of papers \cite{shimura1985eisenstein,shimura1987hilbert,shimura1993theta}.  Suppose that $\frakN$ is \defi{odd} (i.e., $\N(\frakN)$ is odd).  A \defi{Hilbert modular form} over $F$ of \defi{half-integer weight} $k \in (1/2,\dots,1/2)+\Z_{\geq 0}^n$, \defi{level} $4\frakN$, and quadratic \defi{character} $\psi$ of conductor dividing $4\frakN$ is a
holomorphic function $f:\calH^n\to\C$ such that for all
$z \in \calH^n$ and all $\gamma=\begin{pmatrix} a & b \\ c & d \end{pmatrix} \in \Gamma_0(4\frakN)$, we have
\[
f(\gamma z) = \psi(d)h(\gamma,z)\left(\prod_{i=1}^n\frac{(c_iz_i+d_i)^{k_i}}{\det\gamma_i^{k_i/2}}\right) f(z)
\]
as in \eqref{eqn:fgammazfull}; the factor $h(\gamma,z)$ is an automorphy factor of weight $(1/2,\dots,1/2)$ defined on the metaplectic cover of $\Gamma_0(4\frakN)$ by 
\[ h(\gamma,z)=\frac{\theta(\gamma z)}{\theta(z)}, \quad \text{ where } \theta(z)=\sum_{a \in \Z_F} q^{\Tr(a^2 z)}. \]
In this paper, the half-integral weight modular forms we encounter will arise from linear combinations of weighted theta series, as follows.

Let $Q:V \to F$ be a positive definite quadratic form on a $F$-vector space $V$ with $\dim_F V=d$.  Let $\Lambda \subset V$ be a \defi{(full) $\Z_F$-lattice}, so $\Lambda$ is a finitely generated $\Z_F$-submodule that contains a basis for $V$.  Suppose that $\Lambda$ is \defi{integral}, so $Q(\Lambda) \subseteq \Z_F$.  Let $\frakN$ be the discriminant of $\Lambda$ and suppose that $\frakN$ is odd.

A homogeneous polynomial $P(x)=P(x_1,\dots,x_d)$ on $\R^d$ is \defi{spherical harmonic}
if
\[ \left(\frac{\partial^2}{\partial x_1^2} + \dots + \frac{\partial^2}{\partial x_d^2}\right) P = 0. \]
For $i=1,\dots,n$, let $P_i(x)$ be a spherical harmonic polynomial on $V\otimes_{v_i} \R$
of homogeneous degree $m_i$, and let $P(x)=\prod_{i=1}^n P_i(x)$.  We define the \defi{theta series} associated to $\Lambda$ and $P$ as
\begin{equation} \label{eqn:ThetaLambdaP} 
\begin{aligned}
\Theta(\Lambda,P) : \calH^n &\to \C \\
\Theta(\Lambda,P;z) &= \sum_{x \in \Lambda} P(x)\,q^{\Tr(Q(x)z)}.
\end{aligned}
\end{equation}
Then $\theta$ is a Hilbert modular form of level $4\frakN$, weight $(d/2+m_i)_i$, and quadratic character $\psi$ of conductor dividing $4\frakN$ \cite{MR1246183}.
Note that $\sum_i m_i$ must be even, or else trivially we have $\Theta(\Lambda,P)=0$.

\subsection*{Twists of $L$-functions}  

For the analytic properties of Hilbert modular forms we use in this section, we refer to Shimura \cite{MR507462}.
Let $f \in S_k(\frakN)$ be a Hilbert newform of weight $k$ and level $\frakN$,
and let $k_0=\max(k_1,\dots,k_n)$.
Let $T_\frakn f = a_\frakn f$ and
let $\lambda_\frakn = a_\frakn/\sqrt{\N(\frakn)}^{k_0-1}$.

Associated to $f$ is an $L$-function $L(f,s)$ given by the normalized Dirichlet
series
\[
L(f,s)=\sum_{\frakn}\frac{\lambda_\frakn}{\N(\frakn)^s},
\]
convergent for $\repart s>1$.  Define
the completed $L$-function
\begin{equation} \label{eqn:lambdacomp} 
\Lambda(f,s) = Q^{s/2} \left(\prod_{i=1}^n \Gamma_\C(s+(k_i-1)/2))\right) \cdot L(f,s)
\end{equation}
where $Q=d_F^2\N(\frakN)$ is the \defi{conductor} and
$\Gamma_\C(s)=2(2\pi)^{-s}\Gamma(s)$.  Then $\Lambda(f,s)$ has an analytic continuation to $\C$ and satisfies the functional equation
\[
\qquad\qquad
\Lambda(f,s)=w_f\Lambda(f,1-s)
\qquad\qquad
\text{with $w_f=(-1)^{(k_1+\dotsb+k_n)/2}\, w_{\frakN,f}$,}
\]
where $w_{\frakN,f}$ is the canonical (Atkin-Lehner) eigenvalue of $f$.

We will be interested in twisting $L$-functions by quadratic characters, which are indexed by fundamental discriminants, as follows.  The ring of integers of a quadratic field $K$ is of the form $\Z_K=\Z_F[x]/(x^2-tx+n)$ with $t,n \in \Z_F$; the quantity $D=t^2-4n \in \Z_F / \Z_F^{\times 2}$ is the \defi{fundamental discriminant} uniquely associated to $K=F(\sqrt{D})$.  Consequently, there is a bijection between the set of isomorphism classes of quadratic fields $K/F$ and fundamental discriminants $D$.  An element $D \in \Z_F$ is a fundamental discriminant if and only if $D$ is a square modulo $4$ and minimal with respect to this property under divisibility by squares: i.e., we have $D \equiv t^2 \pmod{4}$ for some $t \in \Z_F$ and,
if $D=f^2 d$ with $d,f \in \Z_F$ and $d$ a square modulo $4$, then $f \in \Z_F^{\times}$.  Let
\[ \calD(\Z_F) = \{D \in \Z_F/\Z_F^{\times 2} : \text{$D$ fundamental discriminant and } D \ll 0\} \]
be the set of totally negative fundamental discriminants.  For $D \in \calD(\Z_F)$, let
$\chi_D$ be the quadratic character associated to $K=F(\sqrt{D})$.  

Suppose that $D$ is coprime to $\frakN$.  We consider the quadratic twist $f_D = f \otimes \chi_D \in S_k(\frakN D^2)$ of $f$ and its associated $L$-function $L(f,s,\chi_D)$ defined by
\[
L(f,s,\chi_D)=L(f_D,s)=\sum_{\frakn}\chi_D(\frakn)\frac{\lambda_\frakn}{\N(\frakn)^{s}}.
\]
The completed $L$-function $\Lambda(f,s,\chi_D) = \Lambda(f_D,s)$ is defined as
in \eqref{eqn:lambdacomp} with $Q=d_F^2\N(\frakN D^2)$;
it satisfies the functional equation
\[
\qquad\qquad
\Lambda(f,s,\chi_D) = w_{f_D}\,\Lambda(f,1-s,\chi_D)
\qquad\qquad
\text{with $w_{f_D} = (-1)^n\,\chi_D(\frakN)\,w_f$.}
\]

\begin{definition} \label{defn:parity}
A weight satisfies the \defi{parity condition} if $(k_1+\dotsb+k_n)/2\equiv n\pmod{2}$.
We say that $D \in \calD(\Z_F)$ is \defi{permitted} if $D$ is coprime to $\frakN$ and 
$\chi_D(\frakq)=w_{\frakq,f}$ for all $\frakq \parallel \frakN$.
\end{definition}

The parity condition ensures that
\begin{equation}\label{eqn:sign}
  w_f=(-1)^n\,w_{\frakN,f},
\end{equation}
and $w_{f_D} = \chi_D(\frakN)\,w_{\frakN,f}$.
For permitted $D$
we have $\chi_D(\frakN)=w_{\frakN,f}$, thus
$w_{f_D}=1$.  Note that the parity condition for parallel weight $k\in 2\Z_{>0}$
holds when $n$ is even or when $n$ is odd and $k\equiv 2\pmod{4}$.  



To simplify notation, we write $|D|=-D$ when $D$ is totally negative.

\begin{conjecture} \label{conj:Waldspurger}
Let $f \in S_k(\frakN)$ be a Hilbert newform of odd squarefree level $\frakN$ such that $k$ satisfies the parity condition.
Then there exists a modular form $g(z)=\sum_{\mu\in (\frakd^{-1})_+}c_\mu q^{\Tr(\mu z)} \in S_{(k+1)/2}(4\frakN)$ such that for all permitted $D \in \calD(\Z_F)$, we have
\begin{equation}\label{eqn:Waldspurger}
  L(f,1/2,\chi_D) = \kappa_f\frac{c_{|D|}(g)^2}{\prod_{i=1}^n \sqrt{|v_i(D)|}^{k_i-1}},
\end{equation}
where $\kappa_f \neq 0$ is independent of $D$.
In the case of parallel weight $k$ the denominator
in the right hand side
is just $\sqrt{\N(D)}^{k-1}$.
\end{conjecture}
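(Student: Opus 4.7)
The plan is to establish the conjecture by combining the Jacquet--Langlands correspondence with a theta lift, following Waldspurger's general strategy as adapted to totally real fields by Shimura, Xue, Baruch--Mao, Sirolli, and Hiraga--Ikeda.

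First, invoking the parity condition together with \eqref{eqn:sign}, I would select a quaternion algebra $B$ over $F$ ramified at a suitable set $S$ of places: all archimedean places, together with the primes $\frakp \mid \frakN$ at which the local Atkin--Lehner eigenvalue $w_{\frakp,f}$ forces a ramified local test datum. The Jacquet--Langlands correspondence transfers $f$ to an automorphic form $\phi$ on $B^\times(\A_F)$ on an Eichler order of level $\frakN/\disc B$. Since $\frakN$ is squarefree, the Eichler order has squarefree level, and $\phi$ can be described concretely on the finite set of double cosets $B^\times \backslash \widehat{B}^\times / \widehat{\calO}^\times$, taking values in $\Sym^{k-2}$ of the standard representation.

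Second, I would construct $g$ by applying a theta lift to $\phi$. The theta kernel is built from the weighted theta series attached to the reduced norm on the three-dimensional $F$-subspace of trace-zero elements of $B$, with spherical harmonic polynomials as in \eqref{eqn:ThetaLambdaP} chosen to match the archimedean weight so that the output has parallel weight $(k+1)/2$. The transformation law of the theta series on $\Gamma_0(4\frakN)$, combined with the metaplectic automorphy factor $h(\gamma,z)$, delivers the claimed level $4\frakN$ and quadratic nebentypus of $g$.

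Third, I would compute the $|D|$th Fourier coefficient of $g$ as a toric period. For a permitted totally negative fundamental discriminant $D$, the embedding $F(\sqrt{D}) \hookrightarrow B$ exists because $\chi_D(\frakq) = w_{\frakq,f}$ matches the local Hasse invariant of $B$ at each $\frakq \parallel \frakN$, and $c_{|D|}(g)$ extracts the period of $\phi$ over the image torus. Waldspurger's central value formula then identifies $c_{|D|}(g)^2$ with $\kappa_f \cdot L(f,1/2,\chi_D) \cdot \prod_{i=1}^n \sqrt{|v_i(D)|}^{k_i-1}$, where the archimedean factor $\prod_i \sqrt{|v_i(D)|}^{k_i-1}$ emerges from the gamma factors in \eqref{eqn:lambdacomp} at $s=1/2$, and $\kappa_f$ absorbs the remaining local integrals together with the Petersson norm of $\phi$.

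The main obstacle will be (a) verifying that the local integrals at primes dividing $\frakN$ are uniformly constant in $D$ across the whole permitted set, so that $\kappa_f$ is genuinely $D$-independent, and (b) proving the non-vanishing $\kappa_f \neq 0$, which requires choosing a test vector $\phi$ whose local components at each $\frakp \mid \frakN$ survive the local theta correspondence. Both steps are classical over $\Q$ but demand careful local bookkeeping in the totally real setting; the squarefreeness of $\frakN$ and the parity condition are precisely the hypotheses that let these local computations proceed uniformly via the Gross--Prasad dichotomy.
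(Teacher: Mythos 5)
The statement you are trying to prove is stated in the paper as a \emph{conjecture}, and the paper deliberately offers no proof: the authors note that it ``follows in principle'' from Waldspurger and Shimura but that they could not extract the explicit statement, the difficult point being the construction of a \emph{nonzero} form $g$; the known cases are $F=\Q$ (Baruch--Mao \cite{BaruchMao}, relying on Kohnen's work for the existence of $g$), prime level $\frakN=\frakp$ with $L(f,1/2)\neq 0$ (Xue \cite{Xue}), and trivial level (Hiraga--Ikeda \cite{HiragaIkeda}). Your outline --- Jacquet--Langlands transfer to a quaternion algebra ramified at all real places and at the primes $\frakl\mid\frakN$ with $w_{\frakl,f}=-1$, theta lift via the ternary form on trace-zero elements with spherical harmonics, and identification of $c_{|D|}(g)^2$ with a toric period --- is exactly the strategy underlying the algorithm of Section~\ref{sec:algs} and the cited literature, so it is the right framework; but as written it is a plan, not a proof, and the two points you defer to the end are precisely where the statement is open.

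Concretely: (a) your ``non-vanishing $\kappa_f\neq 0$'' step conceals the construction of a nonzero $g$, which in this framework fails outright when $L(f,1/2)=0$ (see Remark~\ref{rmk:constL0}: the naive lift gives $g=0$, and one must pass to a different construction as in \cite{MRVT}); the conjecture as stated imposes no hypothesis on $L(f,1/2)$, so ``choosing a test vector that survives the local theta correspondence'' is not a routine verification but the heart of the open problem, even for squarefree composite level over $\Q$-analogues of the known prime-level results. (b) Waldspurger's theorem by itself gives proportionality constants that a priori depend on the local square classes of $D$; upgrading this to a single constant $\kappa_f$ uniform over \emph{all} permitted $D\in\calD(\Z_F)$, with the explicit archimedean factor $\prod_i\sqrt{|v_i(D)|}^{k_i-1}$, requires the explicit local computations (relative trace formula or Gross-type period formulas) that have only been carried out in the special cases listed above. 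Until you supply these two ingredients in the stated generality (arbitrary totally real $F$ of narrow class number one, odd squarefree $\frakN$, no nonvanishing hypothesis), what you have is a correct description of why the conjecture is plausible, not a proof of it.
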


Conjecture \ref{conj:Waldspurger} is known in many cases.  It follows in principle from general work of Waldspurger \cite{Waldspurger} and Shimura \cite{Shimura}, but we were not able to extract the explicit statement above: the difficult part is to construct a nonzero form $g$.  Baruch--Mao \cite{BaruchMao} prove Conjecture \ref{conj:Waldspurger} in the case $F=\Q$ (using work of Kohnen for the existence of $g$).  Xue \cite{Xue} gives a proof for general $F$ and prime level $\frakN=\frakp$, provided $L(f,1/2) \neq 0$; Hiraga--Ikeda \cite{HiragaIkeda} treat general $F$ and trivial level $\frakN=(1)$.  We will discuss in Section~\ref{sec:algs} an algorithm that conjecturally always computes the form $g$ associated to $f \in S_k(\frakN)$ when $L(f,1/2)\neq 0$: it is correct as long as $g \neq 0$, by the work of the above authors.

\begin{remark}
The condition that $\frakN$ is odd and squarefree are important here for the purposes of a concise exposition; already in the case $F=\Q$, to relax these hypotheses involves technical complications.  Similarly, there are extensions to totally real fundamental discriminants as well, but we do not pursue them here.
The restriction to permitted discriminants is necessary; if $\legen{D}{\frakl} = - w_{\frakl,f}$ for some prime $\frakl \parallel \frakN$
then the coefficient $c_{|D|}$ is trivially zero but $L(f,1/2,\chi_D)$ need not be zero.
\end{remark}

\section{Algorithms}\label{sec:algs}

In this section, we discuss algorithms to test the vanishing or nonvanishing of central values $L(f,1/2,\chi_D)$ using the theory of the previous section, encoded as the coefficients of a linear combination of quaternionic theta series.  We continue with our notation from the previous section: in particular, let $f \in S_k(\frakN)$ be a Hilbert newform over $F$.  
Throughout, we employ conventions and algorithms described by Kirschmer--Voight \cite{MR2592031,MR2967473}; a general reference for the results on quaternion algebras we will use is Vign\'eras \cite{MR580949}.

As we will be employing an algorithmic version of Conjecture \ref{conj:Waldspurger}, we assume that $\frakN$ is odd and squarefree and that the weight $k$ satisfies the parity condition (Definition \ref{defn:parity}).  Moreover, we make the assumption that $L(f,1/2) \neq 0$ to work with the simplest construction of the associated half-integral weight form $g$: for extensions, see Remark \ref{rmk:constL0} below.  

\subsection*{Brandt module}

The assumption that $L(f, 1/2)\neq 0$ implies, in particular, that $w_f=1$.
By \eqref{eqn:sign}, we have $w_{\frakN,f}=(-1)^n$;
in other words
$\#\{\frakl\mid\frakN \;:\; w_{\frakl,f}=-1\}\equiv n\pmod{2}$,
and it follows that there is a
quaternion algebra $B$ over $F$ ramified at all the real places
and at primes $\frakl\mid\frakN$ with $w_{\frakl,f}=-1$.
Since $\frakN$ is squarefree there is an Eichler order
$\calO \subset B$ with reduced discriminant $\frakN$.

The quaternion algebra $B$ has a unique involution $\overline{\phantom{x}}:B \to B$ such that the \defi{reduced norm} $\nrd(\alpha)=\alpha\overline{\alpha} \in F$ and \defi{reduced trace} $\trd(\alpha)=\alpha+\overline{\alpha} \in F$ belong to $F$ for all $\alpha \in B$.  The \defi{discriminant} 
\[ \Delta(\alpha)=-(\alpha-\overline{\alpha})^2=4\nrd(\alpha)-\trd(\alpha)^2 \in F \]
defines a positive definite quadratic form $\Delta:B/F \to F$.

A \defi{right fractional ideal} of $\calO$ is a finitely generated $\Z_F$-submodule $I \subset B$ with $IF=B$ such that $I\calO \subseteq I$.  The \defi{right order} of a right fractional ideal $I$ is
\[ \calO_R(I)=\{x \in B : Ix \subseteq  I\}. \] 
Left fractional ideals and left orders are defined analogously.  A right fractional ideal of $\calO$ has $\calO_R(I)=\calO$ if and only if $I$ is \defi{(left) invertible}, so there exists a left fractional ideal $I^{-1}$ with $\calO_R(I^{-1})=\calO_L(I)$ such that $I^{-1}I=\calO$.

Let $I,J$ be invertible right fractional $\calO$-ideals.  We say that $I$ and $J$ are in the same \defi{right ideal class} (or are \defi{isomorphic}) if there exists an $\alpha \in B^\times$ such that $I=\alpha J$, or equivalently if $I$ and $J$ are isomorphic as right $\calO$-modules.  We write $[I]$ for the equivalence class of $I$ under this relation and denote the set of invertible right $\calO$-ideal classes by $\Cl \calO$.  The set $\Cl \calO$ is finite and $H=\#\Cl \calO$ is independent of the choice of Eichler order $\calO$.  
Let $I_1,\dotsc,I_H$ be a set of representatives for $\Cl\calO$
such that $\nrd I_i$ is coprime to $\frakN$ for all $i$.

Let $U_i$ be the space of spherical harmonic polynomials of homogeneous degree $(k_i-2)/2$ on $B_{v_i}/F_{v_i}\cong\R^3$ (with respect to $\Delta$) and let
\[
  W_k = \bigoplus_{i=1}^{n} U_i.
  \]
Then $B^\times$ (and hence $B^\times/F^\times$) acts on $W_k$ by conjugation on each factor.
The space $M_k^B(\frakN)$ of \defi{quaternionic modular forms} of \defi{weight} $k$ and \defi{level} $\frakN$ for $B$ can be represented as
\[
  M_k^B(\frakN) \cong \bigoplus_{i=1}^H W_k^{\Gamma_i}
\]
where $\Gamma_i = \calO_L(I_i)^\times/\Z_F^{\times}$.  
By the correspondence of Eichler, Shimizu, and Jacquet--Langlands,
$M_k^B(\frakN)$ is isomorphic as a Hecke module to a subspace of $\M_k(\frakN)$ of forms that are new at all the primes where $B$ is ramified.
In particular, $f$ corresponds to a quaternionic modular form given
by $(P^{(1)}, \dotsc, P^{(H)})$ with $P^{(i)}\in W_k^{\Gamma_i}$.
Algorithms to compute the coefficients $P^{(i)}$ are due to Socrates--Whitehouse \cite{MR2175121} and Demb\'el\'e \cite{MR2291849} and are surveyed in the work of Demb\'el\'e--Voight \cite{DembeleVoight}.

The form $g \in M_{(k+1)/2}(4\frakN)$ associated to $f$ by Conjecture
\ref{conj:Waldspurger} can be (conjecturally) computed as
\begin{equation}
  g(z) = \sum_{i=1}^H \frac{1}{\#\Gamma_i}\Theta(\Lambda^{(i)}, P^{(i)} ; z)
\end{equation}
where $\Lambda^{(i)}=\calO_L(I_i)/\Z_F$ has quadratic form $\Delta$
and spherical harmonic polynomial $P^{(i)}$.
Therefore, to compute the (non)vanishing twists $L(f,1/2,\chi_D)$,
encoded in the coefficients $c_{|D|}(g)$, we need only compute the theta series
$\Theta(\Lambda^{(i)},P^{(i)})$ with sufficiently many terms
and choose a unique representative for each fundamental discriminant.

\begin{remark} \label{rmk:constL0}
If we follow this construction with $L(f,1/2)=0$, then the corresponding form has $g=0$: see Gross \cite{Gross} and B\"ocherer--Schulze-Pillot \cite{BSP} for the case $F=\Q$
and Xue \cite{Xue} and Sirolli \cite{Sirolli} for general $F$.  On the other hand, 
one expects a nonzero $g$ in
Conjecture~\ref{conj:Waldspurger};
see Mao--Rodriguez-Villegas--Tornar\'ia \cite{MRVT} for an extension of this algorithm to compute $g$ in this case.
\end{remark}


\subsection*{Theta series}

To expand a theta series over $\Z_F$ such as in the previous section, we
consider the form over $\Z$ given by the trace, as follows.
Let $\Lambda$ be an integral $\Z_F$-lattice 
with a positive definite quadratic form $Q$ and a spherical polynomial $P$, and
let $\Theta(\Lambda,P)$ be its theta series as in \eqref{eqn:ThetaLambdaP}.
We will exhibit a method to compute the expansion of
this series over all terms $q^\nu$ with $\Tr \nu \leq T$.

For each embedding $v_i:F \hookrightarrow \R$
the quadratic form $Q:\Lambda \to \Z_F$
yields a positive definite quadratic form $Q_i:\Lambda \to \R$,
and the sum
\begin{align*} 
\Tr Q : \Lambda \cong \Z^{dn} &\to \Z \\
x &\mapsto \Tr Q(x)
\end{align*}
is a positive definite quadratic form over $\Z$.  Then the Fincke--Pohst
algorithm \cite{MR777278}, based on the LLL lattice reduction algorithm
\cite{MR682664}, can be used to iterate over the elements of
\[
  X_T = \{x \in \Lambda : \Tr Q(x)\leq T\}
\]
using $O({T}^{dn/2})$ bit operations for fixed $\Lambda$, $Q$, $P$ as $T \to \infty$,
which is proportional to the volume of the associated region.
This allows one to compute the coefficients of 
\[ \Theta(\Lambda,P;z) = c_0 + \sum_{\nu \in (\Z_F)_+} q^{\Tr(\nu z)} \]
for all $\nu$ with $\Tr \nu\leq T$ as
\[
  c_{\nu} = \sum_{\substack{x\in X_T \\ Q(x)=\nu}} P(x) \,.
\]

\subsection*{Discriminants}

To test the conjectures highlighted in the introduction, among the coefficients in the theta series computed in the previous subsection we need to find a unique set of representatives for fundamental discriminants with bounded norm (not trace).  In this subsection, we explain how to do this.

First, we will need to work with ``balanced'' representatives of fundamental
discriminants, up to the action of $U_F=\Z_F^{\times 2}$.
To do so, we find a fundamental domain $\Delta$ for the action of $U_F$ on $F_+
\hookrightarrow \R_{>0}^n$, and choose the representative $D$ such that $-D$
lies in this fundamental domain.
The study of such domains was pioneered by Shintani \cite{MR0427231};
see also the exposition by Neukirch \cite[\S VII.9]{MR1697859}.

\begin{theorem}
There exists a $\Q$-rational polyhedral cone $\Delta$ that is a finite disjoint union of simplicial cones such that 
\[ \R_{>0}^n = \bigsqcup_{u \in U_F} u \Delta. \]
\end{theorem}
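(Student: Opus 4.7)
The plan is to combine Dirichlet's unit theorem with Shintani's explicit polyhedral construction.

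First, Dirichlet's unit theorem applied to the totally real field $F$ gives $\Z_F^\times \cong \{\pm 1\} \times \Z^{n-1}$, so $U_F = \Z_F^{\times 2}$ is free abelian of rank $n-1$ consisting of totally positive units; fix generators $\epsilon_1, \ldots, \epsilon_{n-1} \in F_+$. Writing $v = (v_1, \ldots, v_n): F \to \R^n$ for the product of real embeddings, each $\epsilon \in U_F$ acts on $\R_{>0}^n$ by componentwise multiplication by $v(\epsilon)$, and applying the componentwise logarithm $\ell(y) = (\log y_1, \ldots, \log y_n)$ conjugates this action to translation by the lattice $L = \ell(v(U_F))$ in the trace-zero hyperplane $H_0 = \{x \in \R^n : \sum_i x_i = 0\}$, using $\prod_i v_i(\epsilon) = \N_{F/\Q}(\epsilon) = 1$; here $L$ has full rank $n-1$. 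Decomposing $\R^n = H_0 \oplus \R(1, \ldots, 1)$ with the second summand $U_F$-fixed, a closed fundamental parallelepiped $\Pi \subset H_0$ for $L$ yields a fundamental domain $\Pi \oplus \R(1, \ldots, 1)$ for $U_F$ on $\R^n$; its image under $\exp$ is a (non-polyhedral) fundamental domain in $\R_{>0}^n$ that needs to be refined.

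The main step is to exhibit a $\Q$-rational simplicial fan $\Sigma$ in $\R_{>0}^n$ whose maximal cones tile a fundamental domain for $U_F$. I would select finitely many totally positive elements $a_1, \ldots, a_m \in F_+$ so that the rays $\R_{>0} \cdot v(a_j) \subset \R_{>0}^n \cap \Q^n$ serve as the one-skeleton of $\Sigma$; since $v(F_+)$ is dense in $\R_{>0}^n$, these can be chosen so that the maximal cones of $\Sigma$, intersected with the norm-one surface $S = \{y : \prod_i y_i = 1\}$, triangulate a closed fundamental region for the free cocompact $U_F$-action on $S$. Each resulting cone $\sigma = \R_{>0} \la v(a_{j_1}), \ldots, v(a_{j_n}) \ra$ is then $\Q$-rational and simplicial; after applying a standard half-open convention on shared faces, the disjoint union of these cones gives the desired $\Delta$.

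The hard part is ensuring that the face identifications on $\partial \Delta$ induced by multiplication by the $\epsilon_i$ are compatible with the simplicial subdivision, i.e., that the $U_F$-translates of the $\sigma_j$ genuinely partition $\R_{>0}^n$ without overlap or omission. The standard recipe (Shintani, refined by Colmez; see Neukirch \S VII.9) constructs $\{a_j\}$ from the $2^{n-1}$ partial products $\prod_{i \in I} \epsilon_i$ for $I \subseteq \{1, \ldots, n-1\}$, which correspond in log-coordinates to the vertices of a fundamental parallelepiped for $L$. A Kuhn-style simplicial subdivision of this parallelepiped then produces a fan whose boundary face pairings are realized directly by multiplication by the $\epsilon_i$, yielding the required disjoint union $\R_{>0}^n = \bigsqcup_{u \in U_F} u\Delta$.
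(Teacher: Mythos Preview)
The paper does not supply its own proof of this theorem: it attributes the result to Shintani, points to Neukirch \S VII.9 for exposition, and then moves on to the algorithmic issue of computing $\Delta$ in practice. Your sketch follows exactly this classical route (Dirichlet units, logarithm map to the trace-zero hyperplane, rational rays from totally positive elements, half-open simplicial subdivision), and you even cite the same sources, so there is no divergence in approach.

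That said, as a self-contained argument your sketch is incomplete precisely where you flag it. The claim that the $2^{n-1}$ partial products $\prod_{i\in I}\epsilon_i$ always span cones whose $U_F$-translates tile $\R_{>0}^n$ is not automatic for $n\geq 3$: without further hypotheses on the $\epsilon_i$, these cones can overlap or fail to be simplicial, and Shintani's original argument (and Colmez's refinement) handles this with genuine care rather than a one-line appeal to a Kuhn subdivision. Since the paper itself defers to the literature here, this is not a discrepancy with the paper, but if you intend your write-up to stand on its own you should either carry out the combinatorial verification in detail or, like the paper, simply cite Shintani and Neukirch for the result.
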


Shintani gives an effective procedure for computing $\Delta$, but it is quite complicated to carry out in practice.  Further algorithms for computing the cone $\Delta$ are given by Okazaki \cite{MR1251215} and Halbritter--Pohst \cite{MR1823195}.  

For concreteness, in this subsection we assume that $F$ is a real quadratic field.  (Similar arguments work for higher degree fields, including work done for real cubic fields \cite{MR565136,MR2922336}, but for degree $n=[F:\Q] \geq 4$, they are quite a bit more complicated.)  In this case, we have $\Z_F^{\times 2}=\eps^{\Z}$; replacing $\eps$ by $1/\eps$, we may assume that $\eps_2>\eps_1$, and then $\eps$ is unique.  Then the Shintani domain $\Delta$ is the cone over the vectors $(1,1)$ and $(\eps_1,\eps_2)$, so $(x_1,x_2) \in \Delta$ if and only if $1 \leq x_2/x_1 < \eps_2/\eps_1$.  

\begin{lemma} \label{lem:x1x2N}
If $x=(x_1,x_2) \in \Delta$ has $\N(x)=x_1x_2 \leq X$ then 
\[ \Tr x=x_1+x_2 \leq \Tr(\eps)\sqrt{X}. \]
\end{lemma}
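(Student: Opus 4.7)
\medskip

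The plan is to parametrize the elements of $\Delta$ by the ratio of their two real embeddings and reduce the inequality to an elementary one-variable optimization. Write $r = x_2/x_1$, so that the condition $x \in \Delta$ becomes $r \in [1,\eps_2/\eps_1)$. Since $x_1,x_2 > 0$, knowing $r$ and $\N(x) = x_1 x_2$ recovers $x_1,x_2$ via $x_1 = \sqrt{\N(x)/r}$ and $x_2 = \sqrt{r\N(x)}$.

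Next, I would compute
\[
  \Tr(x) = x_1 + x_2 = \sqrt{\N(x)}\left(\frac{1}{\sqrt{r}} + \sqrt{r}\right),
\]
and observe that for $r \geq 1$ the function $\varphi(r) = \sqrt{r} + 1/\sqrt{r}$ has derivative $(r-1)/(2 r^{3/2}) \geq 0$, hence is non-decreasing on $[1,\eps_2/\eps_1)$. So
\[
  \Tr(x) \leq \sqrt{\N(x)} \cdot \varphi(\eps_2/\eps_1) = \sqrt{\N(x)}\left(\sqrt{\tfrac{\eps_1}{\eps_2}} + \sqrt{\tfrac{\eps_2}{\eps_1}}\right).
\]

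Finally I would use that $\eps \in \Z_F^{\times 2}$ is a totally positive unit, so $\N(\eps) = \eps_1 \eps_2 = 1$; therefore $\sqrt{\eps_1/\eps_2} = \eps_1$ and $\sqrt{\eps_2/\eps_1} = \eps_2$, which gives $\varphi(\eps_2/\eps_1) = \eps_1 + \eps_2 = \Tr(\eps)$. Combining with $\N(x) \leq X$ yields $\Tr(x) \leq \Tr(\eps)\sqrt{X}$, as required.

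There is no real obstacle here: the argument is purely elementary. The only subtlety is making sure that the normalization $\eps_1 \eps_2 = 1$ is indeed in force, which follows from the fact that $\eps$ is a square of a fundamental unit and hence has norm $+1$ (and is totally positive, which is why we are working in $\R_{>0}^n$ in the first place).
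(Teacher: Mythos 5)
Your proof is correct and follows essentially the same route as the paper: the paper simply notes that, after reducing by homogeneity to the norm-one slice, the maximum of $\Tr x$ on $\Delta$ occurs at the vertex $(\eps_1,\eps_2)$, which is exactly what your monotonicity computation for $\varphi(r)=\sqrt{r}+1/\sqrt{r}$ together with $\eps_1\eps_2=1$ verifies in detail.
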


\begin{proof}
By homogeneity, it suffices to prove this for the case $X=1$.  Then the maximum value of $\Tr x$ subject to $\N(x)=1$ and $x \in \Delta$ occurs at the vertex $(\eps_1,\eps_2)$.
\end{proof}

With Lemma \ref{lem:x1x2N} in hand, to compute the coefficients $c_{|D|}(g)$, one for each $D \in \calD(\Z_F;X)$, we apply the method of the previous subsection with $T=\gamma_F \sqrt{X}$ where $\gamma_F=\Tr(\eps)$ and select only those with $-D \in \Delta$.  We can further restrict to fundamental discriminants by factoring $D\Z_F$ to ensure that its odd part is squarefree, the exponent of an even prime is $\leq 3$, and finally that $D/e^2$ is not a square modulo $4$ for all even nonassociate prime elements $e$.  In a similar way, we can enumerate \emph{all} permitted $D \in \calD(\Z_F;X)$ using the unary theta series $Q(x)=x^2$ on $\Z_F$.  

\begin{remark} \label{rmk:tracenormslice}
For a totally real field $F$ of degree $n=[F:\Q]$, by similar reasoning and the arithmetic-geometric mean, there exists $\gamma_F>n$ depending only on $F$ such that for all $x \in \Delta$, we have
\[ n\N(x)^{1/n} \leq \Tr(x) \leq \gamma_F \N(x)^{1/n}, \]
i.e., $\Tr(x) =\Theta(\N(x)^{1/n})$.  (In fact, $\gamma_F=\max_\eps |\Tr(\eps)|$ for $\eps$ a ray of the Shintani cone.)  So for the purposes of testing our conjectures, one can use either trace or norm, and for higher degree fields, the former is much simpler to implement.
\end{remark}

\begin{remark}
The algorithms above are used for experimental purposes, so we do not give a
precise running time for them; however, we can give a rough idea of the running
time (which bears out in practice): for a fixed field $F$, the time to compute
$\{c_{|D|}(g) \;:\; D \in \calD(\Z_F;X)\}$ for a Hilbert modular newform $f \in
S_k(\frakN)$ over $F$ is governed by computing theta series, which is roughly
\[ O((\N\frakN)(X^{1/n})^{3n/2})=O((\N\frakN)\cdot X^{3/2}) \]
because $\Tr(D)=O(X^{1/n})$ for $-D\in\Delta$ and $\dim S_k(\frakN)=O(\#\Cl \calO)=O(\N\frakN)$ by Eichler's mass formula \cite[Corollaire V.2.5]{MR580949}.
\end{remark}

\subsection*{Example}

The Hilbert modular form of parallel weight $2$ and smallest level
norm over $F=\Q(\sqrt{5})$ is the form $f$ whose label is
\texttt{2.2.5.1-31.1-a} (see Section~\ref{sec:comps} for more on the
labeling system), and we take this form as an example to illustrate the above algorithms.  Let $w=(1+\sqrt{5})/2$, so $w$ satisfies $w^2-w-1=0$ and $\Z_F=\Z[w]$.  Let $\frakN=(5w-2)$, so $\N\frakN = 31$.  Then there is a unique form in $S_2(\frakN)$; it has rational Hecke eigenvalues 
\[ a_{(2)}=-3, a_{(2w-1)}=-2, a_{(3)}=2, a_{(3w-2)}=4, \dots \]
and is neither CM nor base change.  By the Eichler--Shimura construction, this form corresponds to the isogeny class of the elliptic curve
\begin{equation} \label{eqn:E5}
 E:y^2 + xy + wy = x^3 + (w + 1)x^2 + wx 
 \end{equation}
via $\#E(\Z_F/\frakp) = \N\frakp+1-a_\frakp$ for $\frakp \neq \frakN$.

The $L$-function $L(f,s)$ has conductor $Q=5^2 \cdot 31=775$ and $L(f,1/2)=0.3599289\ldots \neq 0$; the Atkin--Lehner eigenvalue is $w_{\frakN,f}=w_f=1$.  Like any form of parallel even weight $k$ over a real quadratic field, the form $f$ satisfies the parity condition $2k \equiv 0 \pmod{4}$.  

We find a quaternion algebra ramified only at the 2 real places of $F$ to be $B=\quat{-1,-1}{F}$, with Eichler order of level $\frakN$ given by
\[ \calO=\Z_F \oplus \Z_F(5w-2)i \oplus \Z_F\frac{(w+1)+(w+8)i+j}{2} \oplus \Z_F\frac{w+(w+25)i+k}{2}. \]
There are two right ideal classes in $\calO$, with the class $[I_1]=[\calO]$ and the nontrivial class represented by the ideal
\[ I_2 = (w + 3)\calO + \frac{(w+11)+(w+70)+j}{2}\calO \]
of reduced norm $\nrd(I)=(3w-2)$ itself of norm $11$.  We have $\#\Gamma_1=\#(\calO_1^{\times}/\Z_F^{\times})=5$ and $\#\Gamma_2=3$.  In parallel weight $2$, the weight space $W_k=\C$ is trivial, and we find that accordingly $M_2^B(\frakN)$ has dimension $2$ as a $\C$-vector space.  The eigenform $f$ corresponds to the eigenvector $5[I_1] - 3[I_2]$, so we have $P^{(1)}=5$ and $P^{(2)}=-3$.

We have $\calO_L(\calO)=\calO$, and the discriminant form $\Delta$ on $\Lambda^{(1)}=\calO/\Z_F$ gives a theta series
\begin{align*} 
\Theta(\Lambda^{(1)};z) &= \sum_{v \in \Lambda^{(1)}} q^{\Tr(\Delta(v) z)}  \\
&= 1 + q^{\Tr((-w+3)z} + q^{\Tr((w+2)z} + 0q^{\Tr(3z)} + q^{\Tr((4w+3)z)} + \dots
\end{align*} 
The terms $\nu=0,-4w+12,4w+8$ correspond to nonfundamental discriminants.  In a similar way we compute $\Theta(\Lambda^{(1)};z)$ and then compute 
\begin{align*}
g(z) &= \Theta(\Lambda^{(1)},P^{(1)};z) + \Theta(\Lambda^{(2)},P^{(2)};z)  = \frac{5}{5}\Theta(\Lambda^{(1)};z) - \frac{3}{3}\Theta(\Lambda^{(2)};z) \\
&= q^{\Tr((w+2)z)} + q^{\Tr((-w+3)z)} - q^{\Tr(3z)} - q^{\Tr((3w+3)z)} + q^{\Tr((-3w+6)z)} + q^{\Tr((4w+3)z)} + \dots
\end{align*}
We confirm that the quadratic twists of $E$ by the discriminants $D=-w-2,w-3,-3,-3w-3,3w-6,-4w-3$ all have rank $0$, consistent with this series; we find out first vanishings at $D=8w-43,-9w-38$, and the corresponding twists indeed have rank $2$.

\section{Conjectures about central values of $L$-functions}\label{sec:rmt}


Random matrix theory has proved useful in refining
conjectures related to the low-lying zeros of $L$-functions
\cite{KeatingSnaith1,KeatingSnaith2}.  In work of Conrey--Keating--Rubinstein--Snaith \cite{CKRS}, the following basic
question was considered.  Let $f \in S_k(N)$ be a newform with rational integer
coefficients.  For how many fundamental discriminants $D$ with $|D|\leq X$ does the twisted $L$-function 
$L (f,s,\chi_D)$ vanish at the center of the critical strip?  In a collection of papers \cite{MRVT,PT,TR}, a
number of variants of this problem were considered: the
weight of $f$ was allowed to vary, the level of $f$ was allowed to be
composite, and so on.

\subsection*{Simple heuristics}

To motivate the power of $X$ in our Conjectures~\ref{conj:ZF} and \ref{conj:Z}, we start with two simple heuristics that follow a similar coarse reasoning and ignore logarithmic factors.  Let
\[ \calN_f(X) = \#\bigl\{D \in \calD(\Z;X) \;:\;
  \text{$\chi_D(N)=-w_f$ and $L(f,1/2,\chi_D)=0$}\bigr\} \]
count the number of vanishing twists of $f$ with sign $+1$.  
If $k=2$, then $f$ corresponds to an elliptic curve $E$ over $\Q$ of conductor $N$, and so assuming the (weak) Birch--Swinnerton-Dyer conjecture, the function $N_f(X)$ counts the proportion of twists of $E$ with rank at least two.  
The function $\calN_f(X)$ has a heuristic estimate attributed to Sarnak, as follows.  As discussed in Section 2, Conjecture~\ref{conj:Waldspurger} relates the central value of the $D$-th twist of $f$ to the
coefficient $c_{|D|}(g)$ of a particular half-integer weight modular
form $g$ attached to $f$ according to the formula
\begin{equation}\label{eqn:WaldspurgerQ}
  L(f,1/2,\chi_D)=\kappa_f \frac{c_{|D|}(g)^2}{\sqrt{\abs{D}}}.
\end{equation}
The Ramanujan--Petersson bound on the coefficients of $g$ is $\abs{D}^{1/4+\varepsilon}$ for all $\varepsilon>0$, and so, if $c_{|D|}(g)$ takes a value in the range $\bigl[0,\abs{D}^{1/4}\bigr]$ for $\abs{D}\leq X$ that is not too far from uniform, then it should take the value $0$ approximately $X^{3/4+\varepsilon}$ of the time.  

The same heuristic extends to higher weight and to the case of Hilbert modular forms, as follows.  Let $f$ be a Hilbert modular form of parallel weight $k$ over a field $F$ of degree $n=[F:\Q]$.  The
Lindel\"of Hypothesis (see e.g.\ Baruch--Mao \cite[Conjecture 1.4]{BaruchMao}) implies that the central values of any family of quadratic twist $L$-functions satisfying the Riemann Hypothesis have
\begin{equation} \label{eqn:Lindelof}
  L(f,1/2,\chi_D) = O(\abs{\N(D)}^\varepsilon)
\end{equation}
for all $\varepsilon>0$ as $\abs{\N(D)} \to \infty$.  By Conjecture~\ref{conj:Waldspurger}, the bound \eqref{eqn:Lindelof} is equivalent to
\[ \abs{c_{|D|}(g)}=O(|\N(D)|^{(k-1)/4+\varepsilon}) \]
for all $\varepsilon>0$, so this would also be asserted by the Ramanujan--Petersson bound for half-integral weight Hilbert modular forms.

Now, consider the context of Conjecture~\ref{conj:ZF}.  If for each $D \in \calD(\Z_F;X)$ the coefficient $c_{|D|}(g)$ has a distribution in $[0,|\N(D)|^{(k-1)/4}]$ that is not too far from uniform, then the expected number of times when the value is $0$ is approximately $X^{1-(k-1)/4+\varepsilon}$, since $\calD(\Z_F;X)=\Theta(X)$.  We note that this bound is independent of $n=[F:\Q]$.

On the other hand, consider the case considered in Conjecture~\ref{conj:Z} where we twist only by
twist $D \in \calD(\Z;X)$.  If $c_{|D|}(g)$ is still distributed in a way not too far from uniform, then since
\[
|\N(D)|^{(k-1)/4+\varepsilon}=|D|^{n(k-1)/4+n\varepsilon}
\]
then it should vanish about $X^{1-n(k-1)/4+\varepsilon}$ for all $\varepsilon>0$.

\subsection*{The power of $\log$}

We now pay attention to the power of $\log X$ appearing in our conjectures, and for this refinement we must dig a bit deeper.  Consider first the case $F=\Q$ and weight $k$.  
We follow the heuristic \cite{CKRS,KeatingSnaith2} prescribed by random matrix theory, verified by the large scale computation of central values of $L$-functions twisted by a quadratic character over $\Q$; see also further work by Watkins \cite{MR2523320}.  In this theory, low-lying zeros of $L$-functions are related to values of characteristic polynomials of random matrices of $SO(2m)$ and we deduce:
\begin{equation}\label{eqn:prob2}
\textrm{Prob}[L(f,1/2,\chi_D)=0]\sim C_f 2^{\lambda(D,f)}\frac{(\log |D|)^{3/8}}{\abs{D}^{(k-1)/4}}
\end{equation}
for a constant $C_f$.  Here $2^{\lambda(D,f)}$ is a multiplicative function that increases with the number of prime divisors of $D$ and depends on the arithmetic and geometric nature of $f$.  We assume, too, that for some constant $b_f$, we have $2^{\lambda(D,f)}\approx (\log D)^{b_f}$ on average.  For example, Delaunay--Watkins \cite{MR2322344} for the case of an elliptic curve $E$ (so $F=\Q$ and $k=2$) predict the exponent $b_E$ where $b'_E=1,\sqrt{2}/2,1/3,\sqrt{2}/2-1/3$ depending on properties of the mod $2$ Galois representations occurring for elliptic curves in the isogeny class of $E$ and, roughly speaking, define $2^{\lambda(D,f)}$ to be the product of the Tamagawa numbers $g_p$ for $p\mid D$.  A further refinement could be achieved by considering $2^{\lambda(D,f)}$ to be related to the power of 2 that divides the coefficient $c_{|D|}(g)$ of the half integer weight form that corresponds to $f$ as in Conjecture~\ref{conj:Waldspurger} over $\Q$ where it is a theorem. To find the total number of vanishings, we sum the probability in \eqref{eqn:prob2} over all discriminants up to $X$.

In the setting of Hilbert modular forms we arrive at a formula in an identical way.  As far as we can tell, the random matrix theory arguments do not see the field over which the Hilbert modular form is defined---they only care about the $L$-function and its functional equation.  It is reasonable then to suppose that we have 
\begin{equation}\label{eqn:prob2F}
\textrm{Prob}[L(f,1/2,\chi_D)=0]\sim C_f 2^{\lambda_F(D,f)}\frac{(\log \abs{\N(D)})^{A_n}}{\abs{D}^{(k-1)/4}}
\end{equation}
in analogy with \eqref{eqn:prob2}, where $A_n$ is a constant that depends only on the degree of the field (arising from the normalization of $m$ in $SO(2m)$; $A_1=3/8$) and $2^{\lambda_F(D,f)}$ be a multiplicative function that increases with the number of primes in $\Z_F$ dividing the ideal~$(D)$.
For Conjecture~\ref{conj:ZF}, we suppose that $2^{\lambda_F(D,f)}$ for $D\in\mathcal{D}(\Z_F)$ is, for some $b_f$, on average roughly of size $(\log \N(D))^{b_f}$ and so we have
\begin{align*}
  \calN_f(\Z_F;X)
  &\sim
  \sum_{D\in\calD(\Z_F;X)} 2^{\lambda_F(D,f)}\frac{(\log \abs{\N(D)})^{A_n}}{\abs{\N(D)}^{(k-1)/4}}  \\
  &\sim  \sum_{D\in\calD(\Z_F;X)}(\log \abs{\N(D)})^{b_f} \frac{(\log \abs{\N(D)})^{A_n}}{\abs{\N(D)}^{(k-1)/4}}
  \sim C_f\, X^{(1-(k-1)/4)}(\log X)^{b_f+A_n}.
\end{align*}
For example, if $\lambda_F(D,f)=\omega_F(D)$, we have $b_f=1$: the congruent number curve for $F=\Q$ is an example where $\lambda_{\Q}(D,f)=\omega(D)$.

Now consider the sum
\begin{equation}\label{eqn:sum}
  \sum_{D\in\calD(Z;X)} 2^{\lambda_F(D,f)} \frac{(\log \abs{\N(D)})^{A_n}}{\abs{\N(D)}^{(k-1)/4}},
\end{equation}
as required by Conjecture~\ref{conj:Z}.  Now we must consider $\lambda_F(D)$ with $D\in\Z$.  Let $e_F$ be the expected number of distinct primes $\frakp$ in $\Z_F$ dividing $(p)$ for $p \in \Z$ prime.  For example, for $F$ quadratic we have $e_F=2(1/2)+1(1/2)=3/2$, and for $F$ of degree $n$ with $\Gal(F/\Q) \cong S_n$, we have $e_F=1+1/2+\dots+1/n$ is the $n$th harmonic number.  Then $2^{\lambda_F(D,f)}\sim 2^{e_F\lambda_{\Q}(D,f)} \approx \log(D)^{b_f\,e_F}$ where the $b_f$ is as in the beginning of this section.  Substituting in this estimate, and using that $\N(D) = D^n$ for $D \in \Z$, we are led to conjecture that
\[
  \calN_f(\Z;X)\sim
  C_{f,\Z} X^{(1-n(k-1)/4)}(\log X)^{b_f\,e_F + A_n},
\]
as predicted by Conjecture~\ref{conj:Z}, with $b_{f,\Z}=b_f\,e_F$.

\section{Computations}\label{sec:comps}

In this section we describe the results of the computations resulting from our implementation of the algorithms in Section~\ref{sec:algs}.  When describing a Hilbert modular form we use the labelling found in the LMFDB \cite{lmfdb}.  For example, the label \texttt{2.2.5.1-[2,4]-37.1-b} means the form over the field $F$ with label \texttt{2.2.5.1} (``degree $2$ with $2$ real places of discriminant $5$ numbered $1$'', i.e., $F=\Q(\sqrt{5})$) with weight $k=(2,4)$, level norm $31$ and ideal numbered $1$, and isogeny class $b$.  If $k=(2,\dots,2)$ is parallel weight $2$, we will suppress it and write simply \texttt{2.2.5.1-31.1-a}, for example.

\subsection*{Verification}

To make sure that we got statement of Conjecture~\ref{conj:Waldspurger} exactly correct in the cases we
care about, we computationally verified the
statement as written by comparing the central values we get by
computing them directly using \textsf{lcalc} \cite{lcalc} and by using
the coefficients of the theta series computed with the algorithms
described below.   Consider the ratio
 $D/3\cdot L(f,1/2,\chi_D)/L(f,1/2,\chi_{-3})$ where $D\in\Z$.  By
 Conjecture~\ref{conj:Waldspurger}, this ratio should be the square of an
 integer, namely the square of be the ratio of the corresponding
 coefficients of the associated half-integral modular form.  For the forms \texttt{2.2.5.1-31.1-a} and \texttt{2.2.13.1-4.1-a}, we checked the twists $D=-3,-7,-11,-19,-23,-43$.  
 
 We also verified for the form \texttt{2.2.5.1-31.1-a} and all $\abs{\N(D)} < 10000$ that the vanishing of $c_{|D|}$ matched the vanishing of $L(E,1,\chi_D)$ with $E$ as in \eqref{eqn:E5} using \textsf{Magma}.  (Similar verification was performed by Sirolli \cite{Sirolli}.)
 
\subsection*{Overview of examples}

A Hilbert modular form $f$ over $F$ is a \defi{base change} (BC) if there exists a proper subfield $F' \subsetneq F$ and a Hilbert modular form $f'$ over $F'$ of some weight and level such that the base change of $f'$ (when it exists) to $F$ is equal to $f$ (up to quadratic twist). 

For each of the examples below, we tabulated data related to vanishings of central values of twists and data related to the distributions of values of coefficients.
\begin{enumerate}
\item $F$ quadratic, $k=2$
 (\texttt{2.2.5.1-31.1-a}, neither CM nor BC;
 \texttt{2.2.8.1-9.1-a}, BC but not CM;
 \texttt{2.2.24.1-1.1-a}, BC and CM);
\item $F=\Q(\sqrt{5})$, $k=4$
  (\texttt{2.2.5.1-[4,4]-11.1-a}, neither BC nor CM); and 
\item $F$ cubic, $k=2$
  (\texttt{3.3.49.1-41.1-a}, neither BC nor CM).
\end{enumerate}
(We did not collect enough data for a base change and CM form; a potential candidate is the form \texttt{2.2.24.1-1.1-a}.)

\subsection*{A first example}

We dedicate particular attention to the data related to the form \texttt{2.2.5.1-31.1-a} which we gave as an example at the end of Section \ref{sec:algs}.  In Table~\ref{tbl:obligatory}, we give some timings to provide some idea of how long our computations of this example took and also to provide data related to Conjectures~\ref{conj:ZF} and \ref{conj:Z}.  We study the vanishings $\calN_f(\Z_F;X)$ and $\calN_f(\Z;X)$ as defined in \eqref{eqn:NfF} and \eqref{eqn:NfZF}.
See also Figure~\ref{fig:obligatory} for a graphical representation of these data as well as some data related to the distributions of the Fourier coefficients of the Shimura lift of $f$.

\begin{table}[h]
\[ 
\begin{array}{c|c|cc|cc}
X & \text{time} & \#\mathcal{D}(\Z_F;X) & \calN_f(\Z_F;X) & \#\calD(\Z;X) & \calN_f(\Z;X) \\
\hline
10^2 & 0.07\text{s} & 3 & 0 & & \\ 
10^3 & 0.4\text{s} & 41 & 0&  & \\
10^4 & 5\text{s} & 439 & 41 &  & \\
10^5 & 1 \text{m} 40 \text{s} & 4481& 397 &  & \\
10^6 & 47 \text{m} & 44865 & 3173 &  & \\
10^7 & 46 \text{h} & 448667 & 24748 &387 & 50\\ 
10\cdot 10^7 & & 4486620 & 183100 & 1229 & 120\\
15\cdot 10^7 & 1.5 \text{y} & 6729969 & 259525 & 1509 & 141\\
\end{array}
\]
\caption{Statistics for the Hilbert modular form \texttt{2.2.5.1-31.1-a} over $\Q(\sqrt{5})$}\label{tbl:obligatory}
\end{table}

Because the power $b_f$ of $\log X$ is still uncertain, we appealed to another prediction made by random matrix theory (RMT): for a prime $\frakq \nmid \frakN$, we define
\begin{align*} 
&\calN_f(\Z_F,\frakq,\pm 1;X)=\#\biggl\{D \in \calD(\Z_F;X) \;:\;
    \text{$\chi_D(\frakN)=(-1)^n\,w_f$ and $L(f,1/2,\chi_D)=0$} \\
&\qquad\qquad\qquad\qquad\qquad\qquad\qquad\qquad\qquad\qquad \text{and $\legen{D}{\frakq} = \pm 1$} \biggr\}
\end{align*}
as well as
\begin{equation} \label{eq:Nfq}
R_\frakq(f;X)=\frac{\calN_f(\Z_F,\frakq,+1;X)}{\calN_f(\Z_F,\frakq,-1;X)}
\quad \text{and} \quad R_\frakq(f)=\lim_{X\to \infty} R_\frakq(f;X).
\end{equation}
Then one expects that the limit $R_\frakq(f)$ exists \cite[Conjecture 2]{CKRS} and
\[ R_\frakq(f) = \sqrt{\frac{N\frakq+1-a_\frakq}{N\frakq+1+a_\frakq}}. \]
In Table \ref{tbl:obligatory_cong}, we computed the ratios \eqref{eq:Nfq} for several primes; the agreement is good, which gives a general indication that the predictions of RMT are relevant in this context.

\begin{table}[h]
\[ 
\begin{array}{c|c|cc|cc}
N\frakq & R_\frakq(f;2\cdot 10^7) & \displaystyle{\sqrt{\frac{N\frakq+1-a_\frakq}{N\frakq+1+a_\frakq}}} \\
\hline
5 & 20925/14986 = 1.396 & 1.414 \\
9 & 18237/22481 = 0.811 & 0.817 \\
11 & 17293/24674 = 0.701 & 0.707 \\
11 & 24899/17140 = 1.453 & 1.414 \\
19 & 19955/24802 = 0.805 & 0.817 \\
19 & 24847/20015 = 1.241 & 1.225 \\
29 & 23955/22424 = 1.068 & 1.069 \\
29 & 24040/22294 = 1.078 & 1.069
\end{array}
\]
\caption{Congruence ratios for \texttt{2.2.5.1-31.1-a} over $\Q(\sqrt{5})$}\label{tbl:obligatory_cong}
\end{table}

\begin{remark}
We did one extra experiment with this example by taking a slice of coefficients farther out for the example .  In order to do this efficiently, we took a ``trace slice'', not a ``norm slice''; according to Remark \ref{rmk:tracenormslice}, we expect qualitatively the same behavior, but potentially a different constant.  We computed vanishing statistics for all negative fundamental discriminants $D$ with 
\[   3\sqrt{10^9} < \abs{\Tr(D)} \leq 3\sqrt{10^9+10^6} \]
(here $\gamma_F=3$ for $F=\Q(\sqrt{5})$); according to Remark \ref{rmk:tracenormslice}, this corresponds to the range with
\[   10^9 < \abs{\N(D)} \leq 10^9 + 10^6 \]
up to a constant factor.
We find that there are $1761$ vanishings out of $75688$ for twists over $\Z_F$ and $0$ vanishings out of $3$ for twists over $\Z$.  

We also computed the ratios \eqref{eq:Nfq} inside this slice: we also found weak agreement, e.g.,
\[ \frac{\calN_f(\Z_F;\frakq;+1;10^9+10^6)-\calN_f(\Z_F;\frakq;+1;10^9)}
{\calN_f(\Z_F;\frakq;-1;10^9+10^6)-\calN_f(\Z_F;\frakq;-1;10^9)} = \frac{822}{536} = 1.533 \approx 1.414 \]
for $\N(\frakq)=5$, and similarly for $\N(\frakq)=9,11,11$ we found $625/877=0.713 \approx 0.816$, $609/874 = 0.697 \approx 0.707$, and $903/597 = 1.513 \approx 1.414$.
\end{remark}

\subsection*{Vanishings}

 We mention further experimental results related to vanishings.  In
 Figures~\ref{fig:obligatory}--~\ref{fig:graphs_wt2_cubic} we provide a graphical representation of the ratio 
 $\calN_f(\Z_F,X) / (X^{3/4}(\log X)^{11/8})$ for a variety of Hilbert modular forms of weight 2.  These plots all qualitatively get flat at about the same
 rate, suggesting to us that they are obeying the same qualitative law, the law from Conjecture~\ref{conj:ZF}.
In Figure~\ref{fig:wt4} we provide a graphical representation of
$\calN_f(\Z_F,X)/(X^{1/4}(\log X)^{11/8})$ for a form of weight 4,
as well as some data related to the distribution of the Fourier coefficients of the Shimura lift of $f$.

Our experiments provide convincing evidence for Conjecture~\ref{conj:ZF}, but cannot help us find the power of log despite having a  massive amount of data; by comparison, we can provide almost no evidence for 
Conjecture~\ref{conj:Z} since we collected only a tiny amount of data.  On the other hand, the simple heuristics we
provided in Section~\ref{sec:rmt}, assuming that Conjecture~\ref{conj:ZF} is
correct, gives strong evidence for Conjecture~\ref{conj:Z}.  

\subsection*{Distribution of coefficients}

We computed and stored the coefficients of the Shimura lifts of the
examples mentioned above for two reasons.  First, while the data related to
vanishings discussed in the previous section grows on the order of
$X^{3/4}$, the number of coefficients grows on the order of $X$.  So,
we automatically have more data at our disposal to compare with the
predictions of random matrix theory \cite{CKRS,CKRShalfint}.
Second, the distributions of Fourier coefficients of modular forms of
even integer weight are related to the Sato-Tate Conjecture; given a modular form $f$ the
distribution of its Fourier coefficients depends on, for example,
whether or not the modular form is CM.  The distributions of the
Fourier coefficients of the half-integral weight forms mentioned above are also plotted in
Figures~\ref{fig:obligatory}--~\ref{fig:wt4}  and they all exhibit qualitatively the same behavior, independent of whether or not the form whose
Shimura lift we are considering is CM or not.  

The data we compute are consistent with the random matrix theory predictions as described in \cite{CKRShalfint} and the distributions we compute match the predictions well.

\begin{figure}
\includegraphics[scale=0.5]{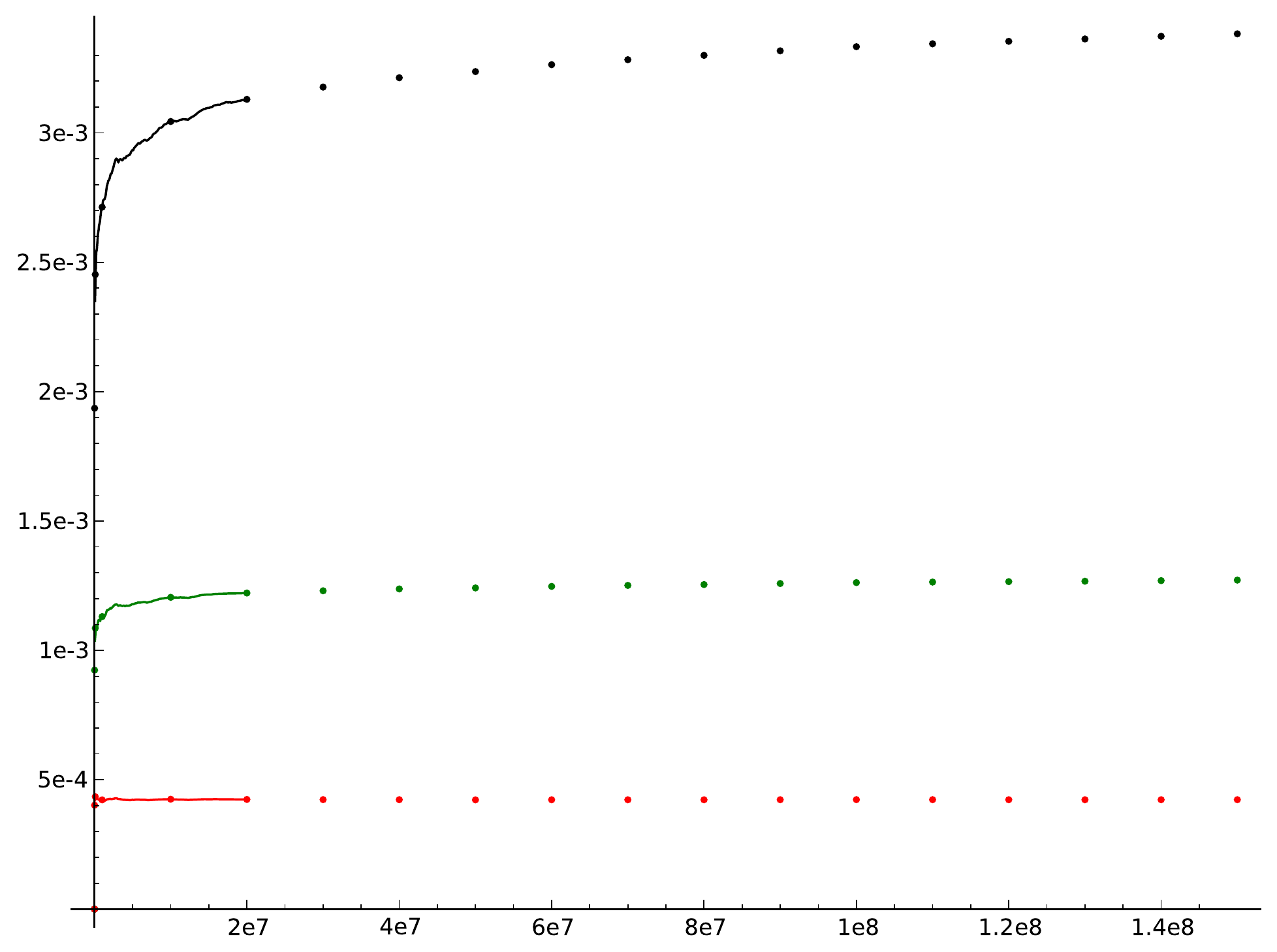}\hfill\includegraphics[scale=.5]{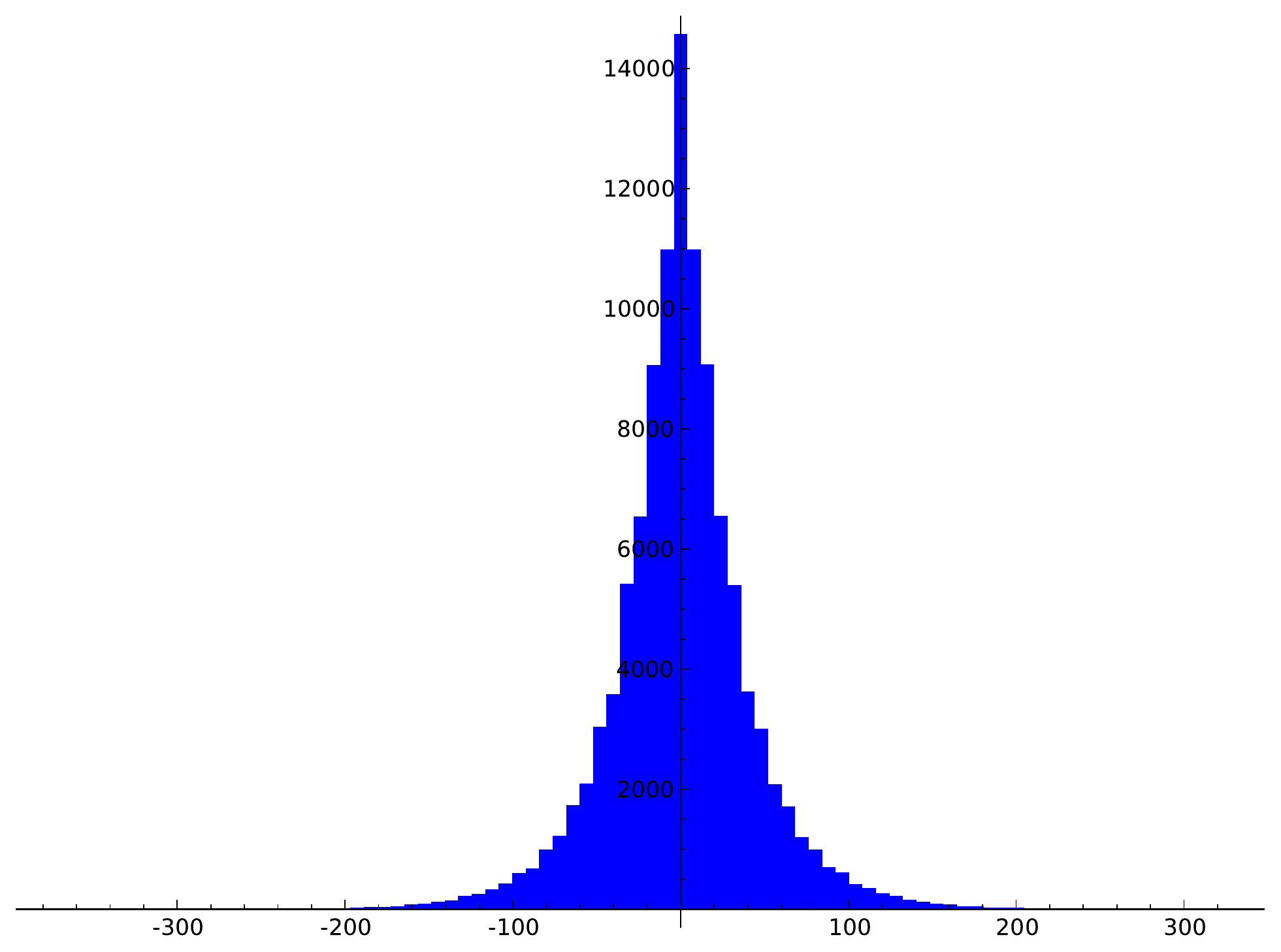}
\caption{The top figure is $\calN_f(\Z_F,X)/(X^{3/4}(\log X)^{e_f})$ for the form \texttt{2.2.5.1-31.1-a} and for various powers of log.  The powers of log are $e_f=11/8$ (shown in black), $e_f=41/24$ (shown in green) and $e_f=25/12$ (shown in red).  The powers of log were determined as follows: $e_f=11/8$ is the prediction analogous to the one made by Delaunay--Watkins for curves with full 2-torsion over $\Q$ but over $F$; $e_f=25/12$ is the least-squares fit of the data and $e_f=41/24$ comes from fitting the data to the computation of vanishings in a slice (see Remark~\ref{rmk:tracenormslice}) and comparing it with the vanishings up to $X$.  The bottom figure is the distribution of the values of the coefficients $c_{|D|}(g)$ that corresponds to $f$ as in Conjecture~\ref{conj:Waldspurger},
for $\N(D)<2\cdot10^7$.}
\label{fig:obligatory} 
\end{figure}

\begin{figure}[h]
  \includegraphics[scale=.3]{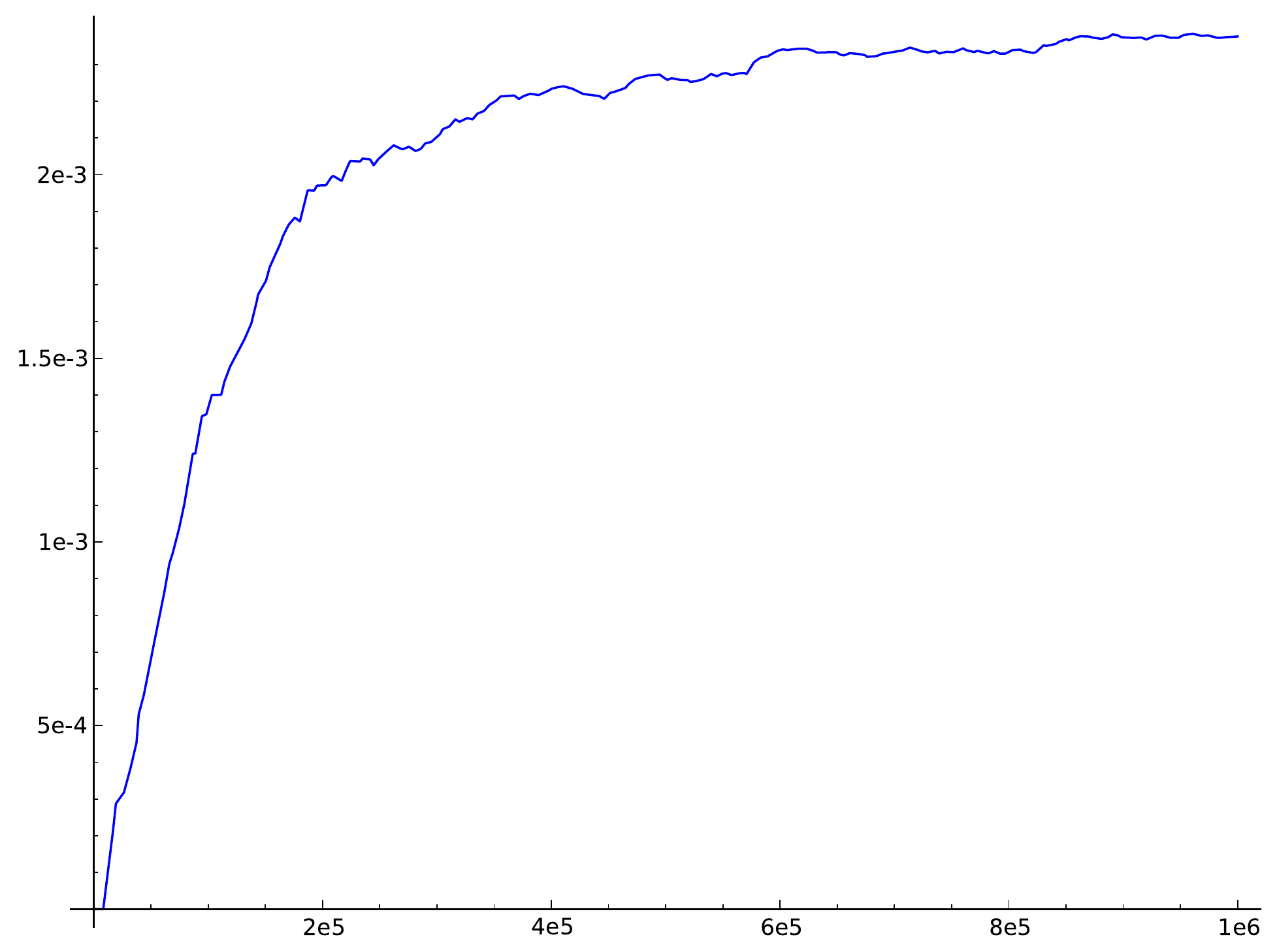}\hfill  \includegraphics[scale=.3]{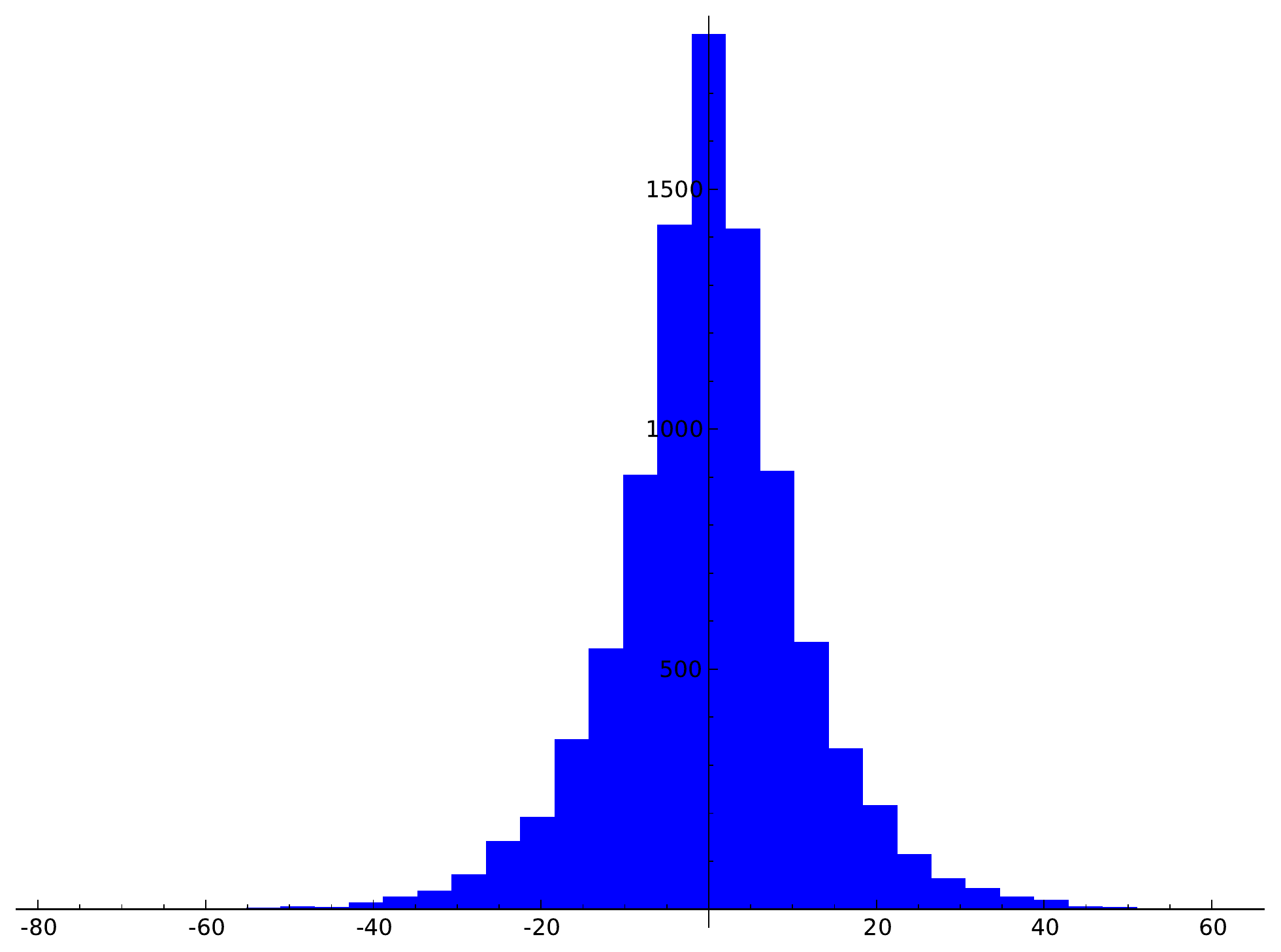}
\caption{$\#N_f(\Z_F,X) / (X^{3/4}(\log X)^{11/8})$ for the form \texttt{2.2.8.1-9.1-a} on the left and the distribution of the values of the coefficients $c_{|D|}(g)$ that corresponds to $f$ as in Conjecture~\ref{conj:Waldspurger}.}\label{fig:graphs_wt2_bc} 
  \end{figure}

\begin{figure}[h]
  \includegraphics[scale=.3]{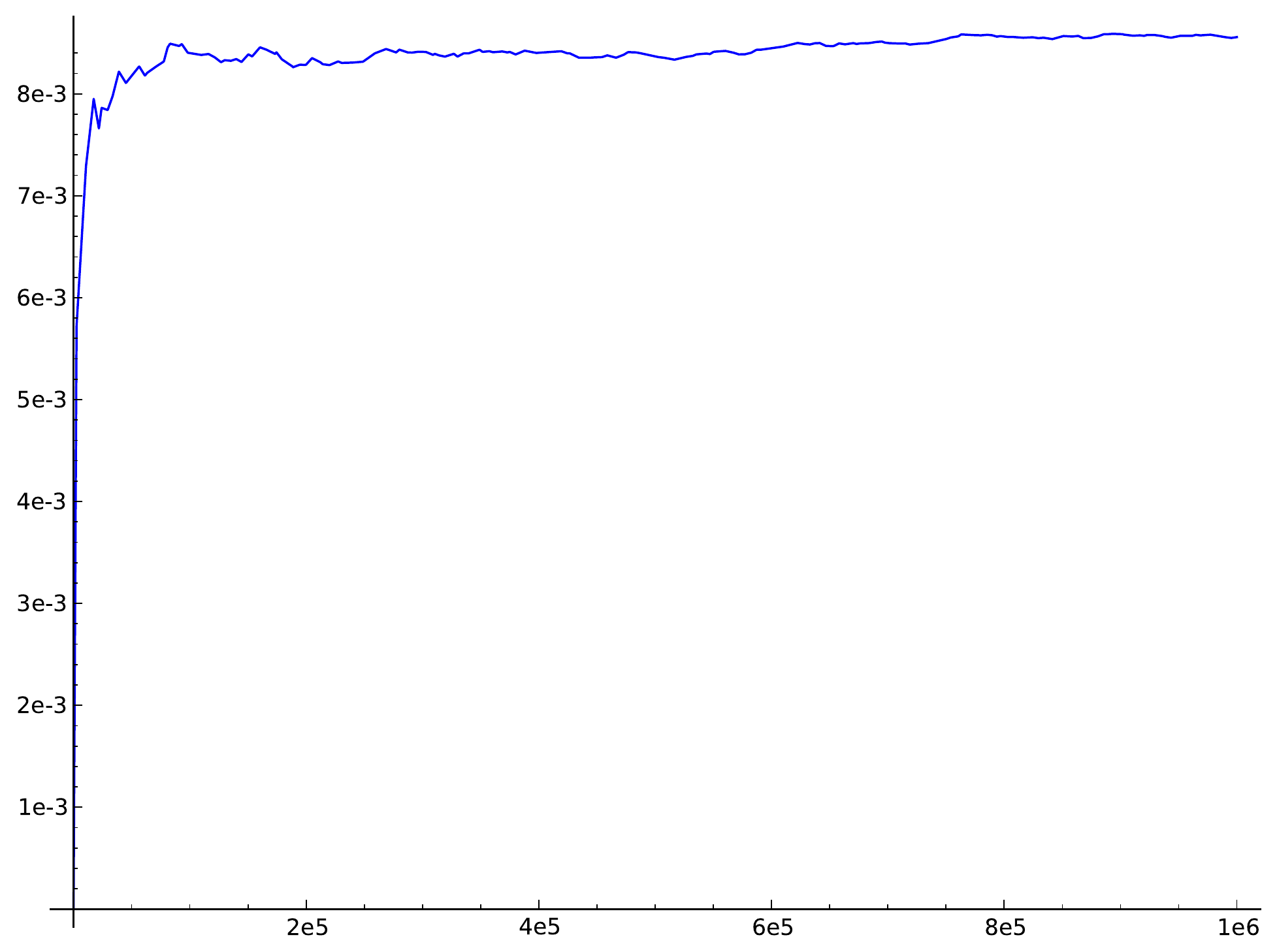}\hfill  \includegraphics[scale=.3]{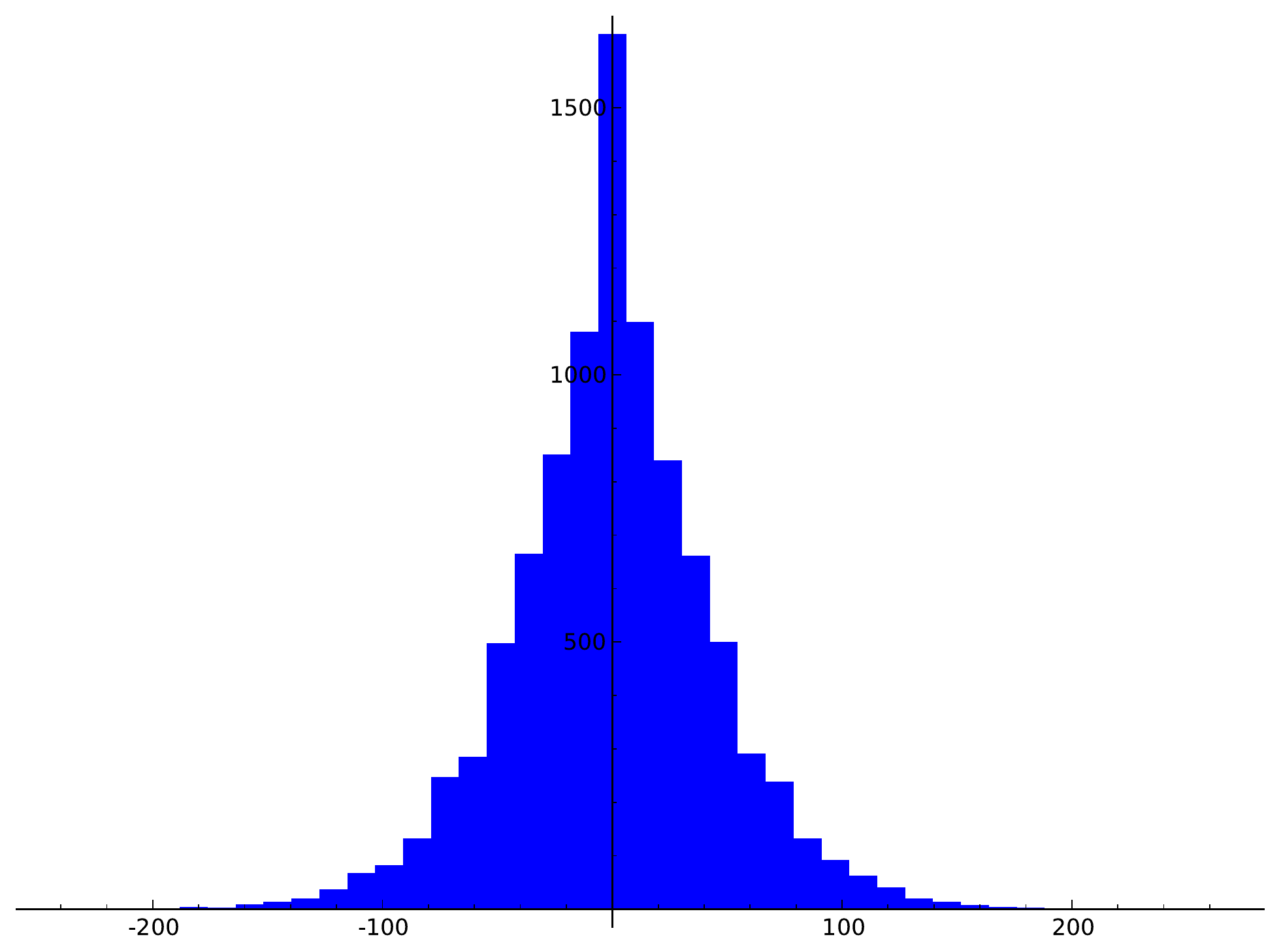}
\caption{$\#N_f(\Z_F,X) / (X^{3/4}(\log X)^{11/8})$ for the form  \texttt{2.2.24.1-1.1-a} on the left and the distribution of the values of the coefficients $c_{|D|}(g)$ that corresponds to $f$ as in Conjecture~\ref{conj:Waldspurger}.}\label{fig:graphs_wt2_cm} 
  \end{figure}
  
  \begin{figure}[h]
  \includegraphics[scale=.3]{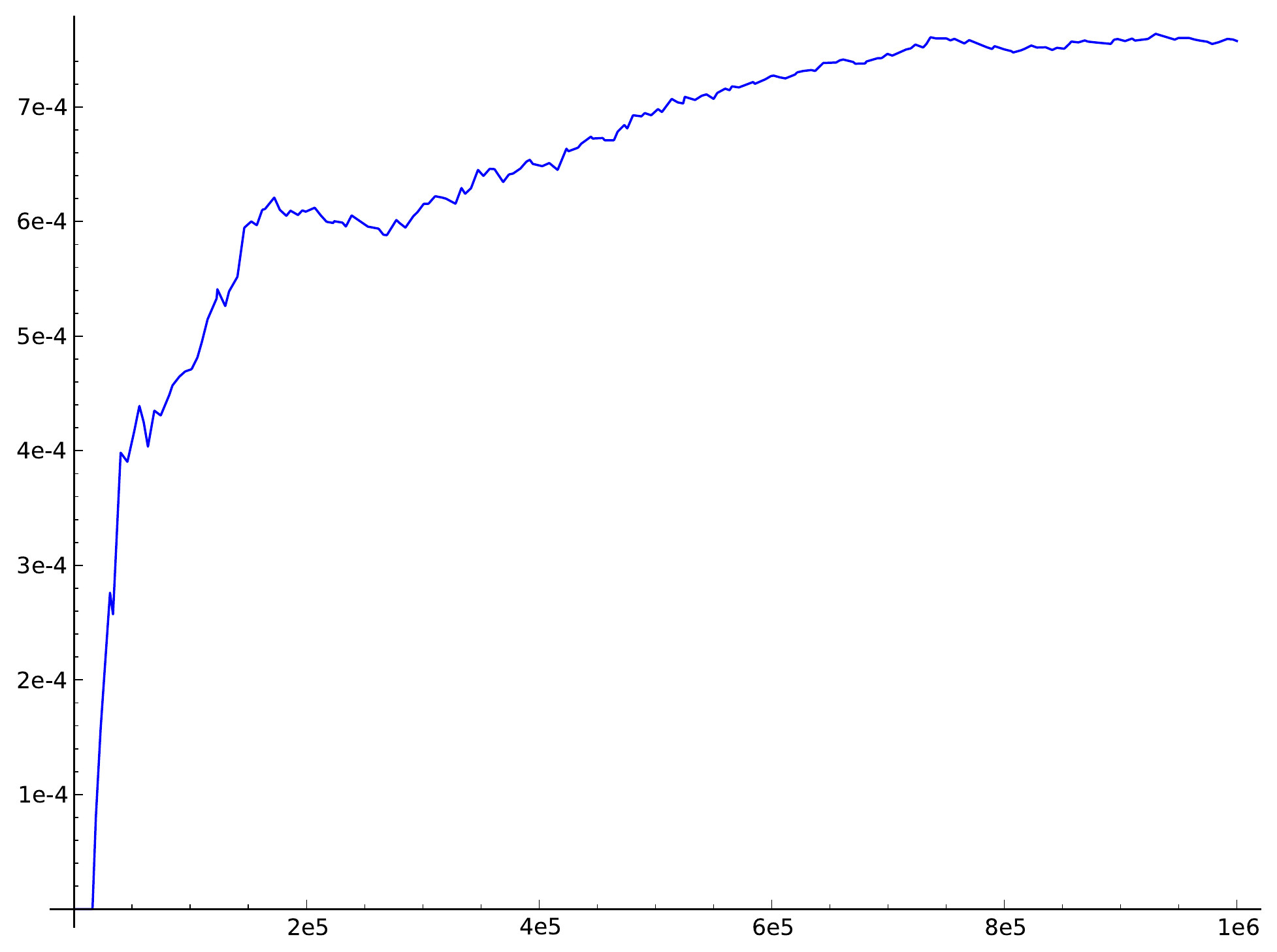}\hfill  \includegraphics[scale=.3]{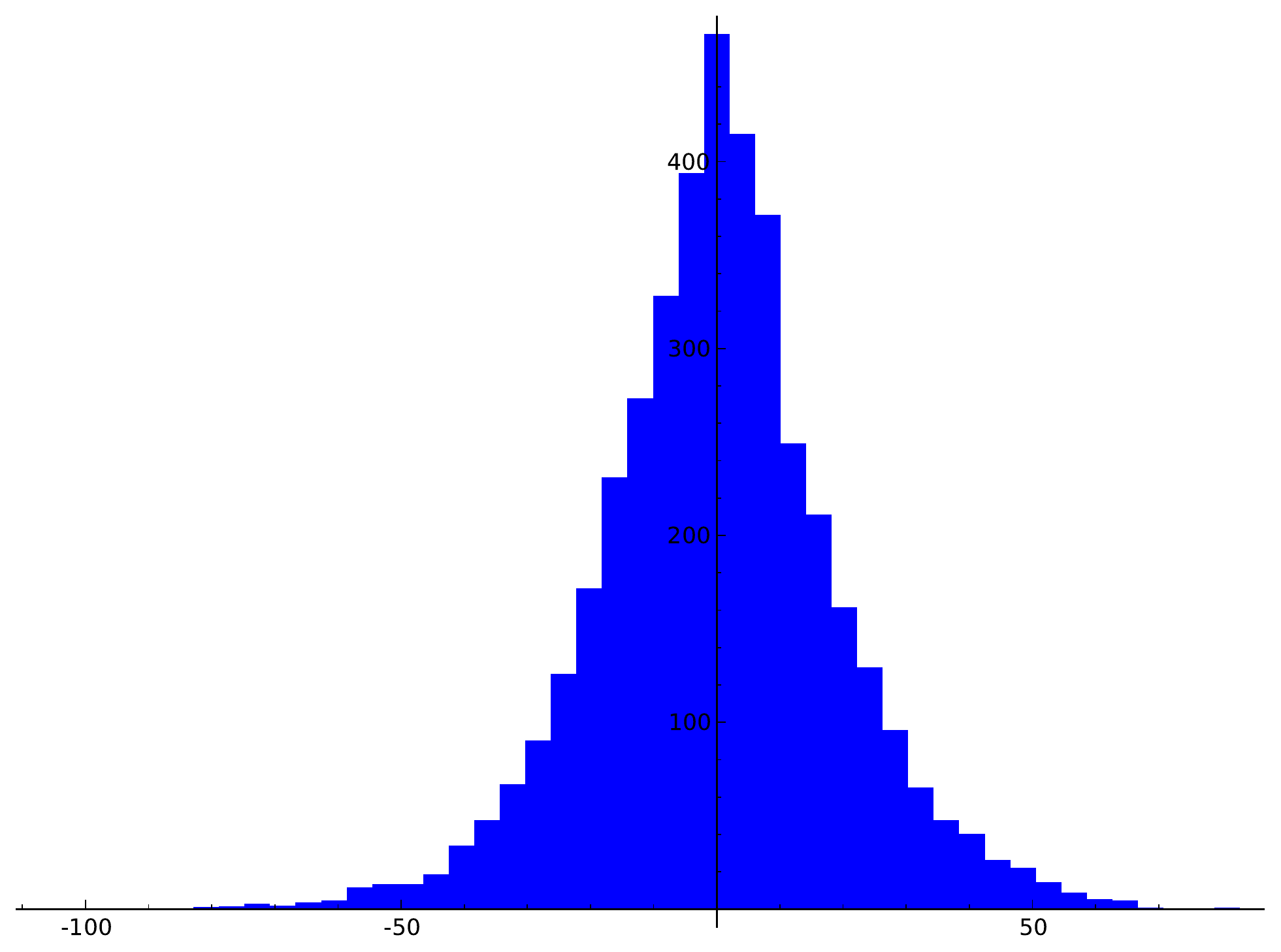}
\caption{$\#N_f(\Z_F,X) / (X^{3/4}(\log X)^{11/8})$ for the form  \texttt{3.3.49.1-41.1-a} on the left and the distribution of the values of the coefficients $c_{|D|}(g)$ that corresponds to $f$ as in Conjecture~\ref{conj:Waldspurger}.}\label{fig:graphs_wt2_cubic} 
  \end{figure}
  
    \begin{figure}[h]
  \includegraphics[scale=.3]{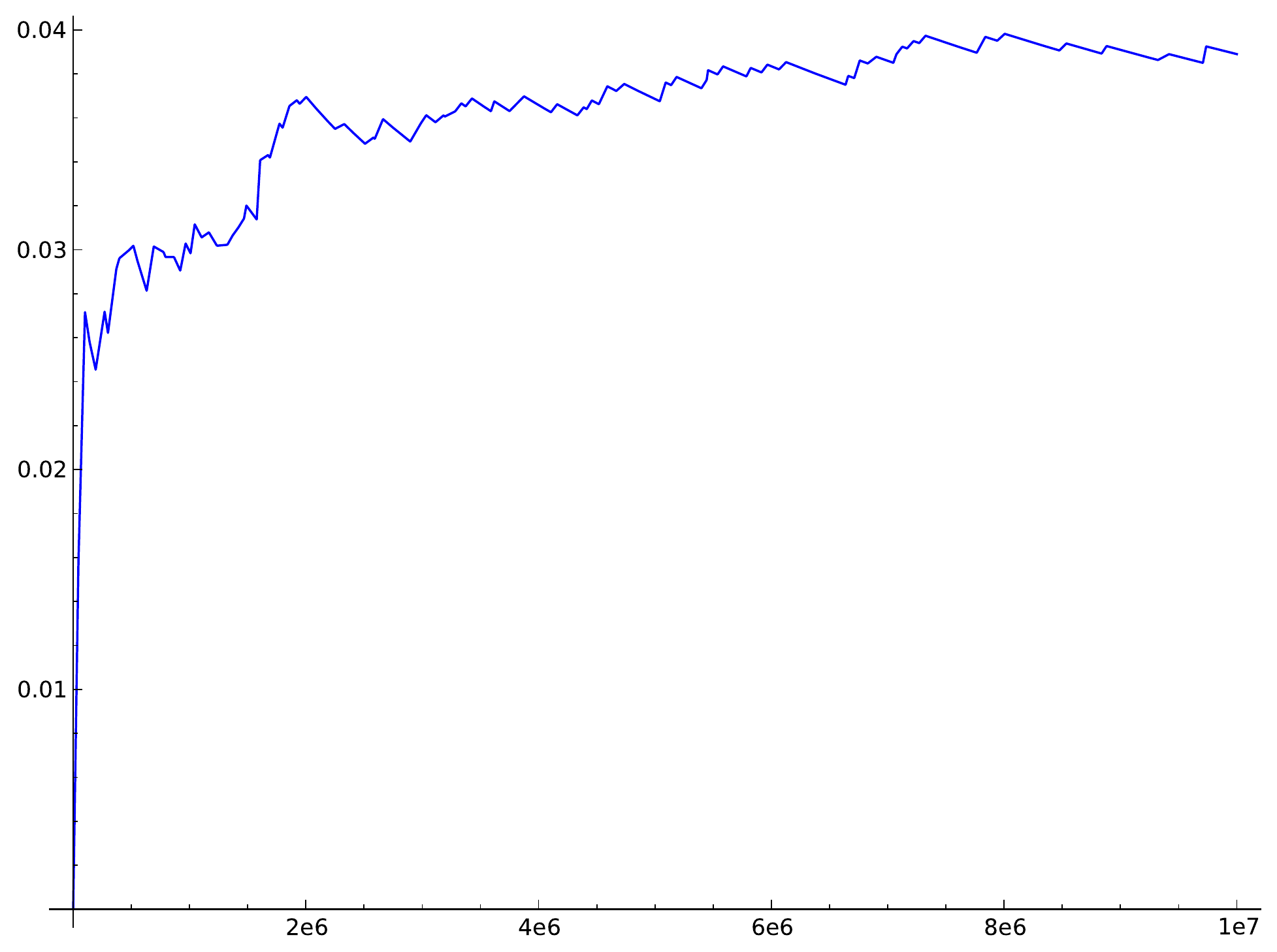}\hfill  \includegraphics[scale=.3]{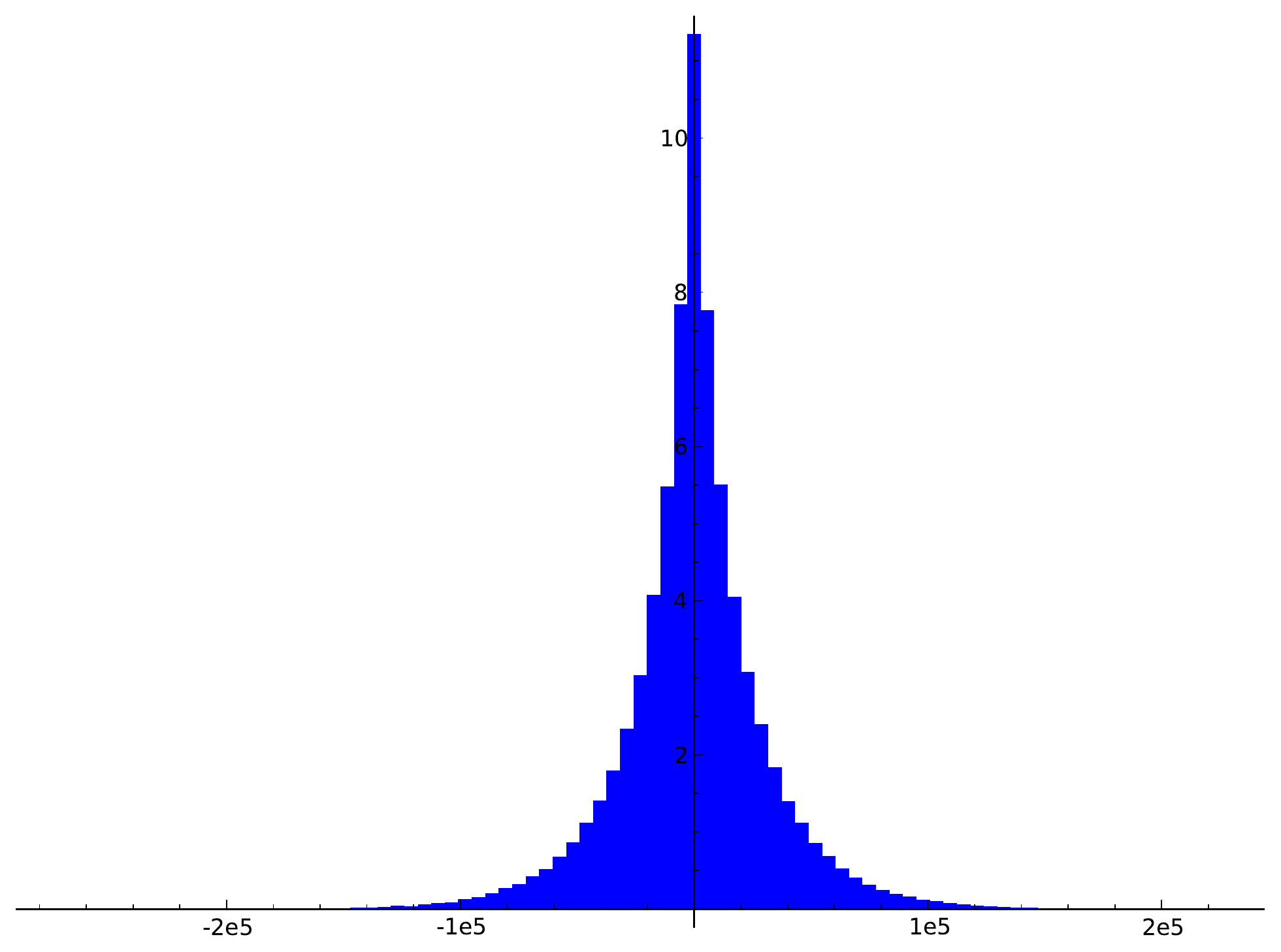}
\caption{$\#N_f(\Z_F,X) / (X^{1/4}(\log X)^{11/8})$ for the form \texttt{2.2.5.1-[4,4]-11.1-a} on the left and the distribution of the values of the coefficients $c_{|D|}(g)$ that corresponds to $f$ as in Conjecture~\ref{conj:Waldspurger}.}\label{fig:wt4} 
  \end{figure}

\section{Remaining questions} \label{sec:ques}

This paper just scratches the surface of this topic, and many questions remain.

\begin{enumerate}
\item What can be said about twists over a totally real field with other signatures?  In particular, are the asymptotics the same for discriminants $D$ such that $F(\sqrt{D})$ is of mixed signature?  (Such extensions are a genuinely new phenomenon over $F \neq \Q$.)

\item Do the same asymptotics apply when the narrow class number of $F$ is bigger than $1$, or does the class group pose an obstruction?

\item What happens when the modular form $f$ has coefficients in a field $K$ larger than $\Q$?  For weight $k=2$, such a form corresponds to an isogeny class of abelian varieties of $\GL_2$-type defined over $F$.  More care must be taken in the discretization step in this situation, since it must be performed with respect to the ring of integers of $K$ embedded as a lattice.

\item Given a half-integer weight (Hilbert) modular form $g$ (corresponding to a form of even weight $f$ under the Shimura correspondence), what is the distribution of $c_{|D|}(g)$, appropriately normalized?  
\end{enumerate}

\begin{acknowledgements}\label{ackref}
The authors would like to thank Brian Conrey and Ariel Pacetti for
helpful discussions, as well as Mark Watkins and the anonymous
referees for several comments and corrections.
\end{acknowledgements}
\newpage

\bibliography{rmt}
\bibliographystyle{alpha}


\affiliationone{
   Nathan C.\ Ryan\\
   Department of Mathematics \\ Bucknell University \\ Lewisburg, PA
17837 \\ 
USA 
\email{nathan.c.ryan@gmail.com}}
\affiliationtwo{
   Gonzalo Tornar\'ia \\
   Centro de Matem\'atica \\
   Universidad de la Rep\'ublica \\ 11400 Montevideo\\Uruguay
   \email{tornaria@cmat.edu.uy}}
\affiliationthree{%
   John Voight \\
   Department of Mathematics \\
   Dartmouth College \\
   6188 Kemeny Hall \\
   Hanover, NH  03755 \\
   USA
   \email{jvoight@gmail.com}}

\end{document}